\journal{Applied Mathematics and Computation}
\def\RR{\mathbb R}
\newtheorem{Th}{Theorem}[section]
\numberwithin{equation}{section}
\newtheorem{uess}[Th]{Lemma}
\newtheorem{guess}[Th]{Theorem}
\newtheorem{remark}[Th]{Remark}
\newtheorem{corol}[Th]{Corollary}
\newtheorem{example}[Th]{Example}
\newtheorem{definition}[Th]{Definition}
\begin{document}  

\begin{frontmatter}

\title{Boundedness and Persistence of Delay Differential Equations with Mixed Nonlinearity}

\author[label1]{Leonid Berezansky}
\author[label2]{Elena Braverman}
\address[label1]{Dept. of Math.,
Ben-Gurion University of the Negev,
Beer-Sheva 84105, Israel}
\address[label2]{Dept. of Math. and Stats., University of
Calgary,2500 University Drive N.W., Calgary, AB, Canada T2N 1N4; e-mail
maelena@ucalgary.ca, phone 1-(403)-220-3956, fax 1-(403)--282-5150 (corresponding author)}


\begin{abstract}
For a nonlinear equation with several variable delays
$$
\dot{x}(t)=\sum_{k=1}^m f_k(t, x(h_1(t)),\dots,x(h_l(t)))-g(t,x(t)),
$$
where the functions $f_k$ increase in some variables and decrease in the others,
we obtain conditions when a positive solution exists on $[0, \infty)$, as well as
explore boundedness and persistence of solutions. Finally, we present
sufficient conditions when a solution is unbounded. Examples include the
Mackey-Glass equation with non-monotone feedback and two variable delays;
its solutions can be neither persistent nor bounded, unlike the well studied case
when these two delays coincide. 
\end{abstract}

\begin{keyword} 
nonlinear delay differential equations \sep a global positive solution \sep 
persistent, permanent and unbounded solutions \sep population dynamics models \sep Mackey-Glass equation 

\noindent
{\bf AMS Subject Classification:} 34K25, 34K60, 92D25, 34K23
\end{keyword}

\end{frontmatter}

\section{Introduction}

Many mathematical models of population dynamics can be written in the form
of a scalar equation
\begin{equation}\label{1}
\dot{x}(t)=f(x(t-\tau))-x(t),
\end{equation}
where $f$ is a nonnegative continuous function describing reproduction or recruitment, $\tau$ is a positive number describing delay. Usually 
these models have a unique positive equilibrium
$K$, and  there is a well-developed theory on the global stability of the positive equilibrium
of (\ref{1}). This theory was applied to many well-known models described by
Eq.~(\ref{1}) such as Nicholson's blowflies delay equation and Mackey-Glass equations.

Eq.~(\ref{1}) can be extended to the case when both the delay and the intrinsic growth rate are variable
\begin{equation}\label{2}
\dot{x}(t)=r(t) \left[ f(x(h(t))) - x(t) \right],
\end{equation}
where $h(t)\leq t$ and $r(t)>0$ are Lebesgue measurable. Global stability 
results for Eq.~(\ref{2}) with applications
to population dynamics can  be found in \cite{FO,Gopalsamy,GT,ILT,ILT2,IvMammad,Lenbury,LizPituk} and references therein,
see also \cite{review_nichol,BBI1,BBI2,MG3}.
 
Another generalization of (\ref{1}) is the model with several production terms and nonlinear mortality
\begin{equation}\label{3}
\dot{x}(t)=\sum_{k=1}^m f_k(t, x(h_1(t)),\dots,x(h_l(t)))-g(t,x(t)),
\end{equation}
where $f_k, g$ are nonnegative continuous functions. This equation with some applications
was studied, for example, in \cite{BBDS,NA2009,BBI1,GHM,M}. 
 
For all mentioned above equations usual assumptions are the following:
the function $f_k$ is either monotone or unimodal, $g(t,u)$ is monotone increasing in $u$, 
there is only one delay involved in $f_k$, and a positive equilibrium is unique. 
However, it is possible to consider more general models,
for example, the modified Nicholson equation
\begin{equation}\label{4}
\dot{x}(t)=\sum_{k=1}^m a_k(t) x(h_k(t))e^{-\lambda_k x(g_k(t))}-b(t)x(t), \quad t \geq 0,
\end{equation}
and the modified
Mackey-Glass type equation 
\begin{equation}\label{5}
\dot{x}(t)=\sum_{k=1}^m \frac{a_k(t)x(h_k(t))}{1+x^{n_k}(p_k(t))}-\left(b(t)-\frac{c(t)}{1+ x^n(t)}\right) x(t),
\quad t \geq 0.
\end{equation}

There are also many generalizations of Eqs. (\ref{1})-(\ref{5})
to the case of distributed delays and integro-differential equations \cite{JMAA2014,Kinz2006,Zhuk2012,Kuang,Yuan}.

Let us illustrate the idea that the presence of several delays instead of one delay can create a new type of dynamics.
As Example~\ref{exampledis1} illustrates, an equation which was stable for coinciding delays can become 
unstable, once the two delays are different. 

\begin{example}
\label{exampledis1}
Consider the modified Mackey-Glass equation with two delays
\begin{equation}
\label{ex1dis1}
\dot{x}(t)= \frac{2x(h(t))}{1+x^2(g(t))}-x(t), \quad t \geq 0.
\end{equation}
The unique positive equilibrium is $x=1$, the function $f(x)=2x/(1+x^2)$ is increasing on $[0,1]$, 
so any positive solution of the equation
\begin{equation}
\label{ex1dis2}
\dot{x}(t)= \frac{2x(h(t))}{1+x^2(h(t))}-x(t), \quad t \geq 0
\end{equation}
satisfies $\lim\limits_{t\to\infty} x(t)=1$, see, for example, \cite{MG3,Zhuk2012}.
Consider (\ref{ex1dis1}) with piecewise constant arguments $h(\cdot)$, $g(\cdot)$. Denote
$a=\ln(59/24)\approx 0.8994836$ and $b=\ln(134/15)\approx 2.1897896$ and let
$$
\varphi(t)=6.4-5.9 e^{-(t+a+b)},~t \in [-a-b,-b],~~
\varphi(t)=\frac{1}{17}+\frac{67}{17} e^{-(t+b)},~t \in [-b,0],
$$
then $\varphi(-a-b)=0.5$, $\varphi(-b)=4$, $\varphi(0)=\frac{1}{17}+\frac{67}{17}\frac{15}{134}=0.5$.
Assume for $n=0,1,2, \dots$
$$
h(t)= \left\{  \begin{array}{ll} \left[  \frac{t}{a+b} \right]-b,& t\in [n(a+b),n(a+b)+a), \vspace{2mm} \\
\left[  \frac{t}{a+b} \right]-a-b, & t\in [n(a+b)+a,(n+1)(a+b)),  \end{array} \right. 
$$
where $[t]$ is the integer part of $t$,
$$
g(t) = 
\left\{  \begin{array}{ll} \left[  \frac{t}{a+b} \right]-a-b,& t\in [n(a+b),n(a+b)+a), \vspace{2mm} \\
\left[  \frac{t}{a+b} \right]-b, & t\in [n(a+b)+a,(n+1)(a+b)).  \end{array} \right. 
$$
Then the solution is $(a+b)$-periodic, the equation is
$ \dot{x}(t) = \frac{32}{5}-x(t)$ on $[n(a+b),n(a+b)+a)$, $x(n(a+b))=\frac{1}{2}$  and 
$ \dot{x}(t) = \frac{1}{17}-x(t)$ on $[n(a+b)+a,(n+1)(a+b))$, $x(n(a+b)+a)=4$.
Thus, with two delays, the equilibrium $K=1$ of Eq.~(\ref{ex1dis1}) is not globally
asymptotically stable, unlike (\ref{ex1dis2}).
\end{example}

As Example~\ref{exampledis1} illustrates, an equation which was stable for the coinciding delays can have 
oscillating solutions with a constant amplitude which do not tend to the positive equilibrium.
According to Example~\ref{MG_is_bad}, two different delays can lead not only to sustainable oscillations but also  
to unbounded solutions.
 
The purpose of the present paper is to consider a general nonlinear delay equation which
includes (\ref{4}), (\ref{5}) as particular cases and study the following properties of
these equations: existence and uniqueness of a positive global solution,
persistence, permanence, as well as existence of unbounded solutions.
To the best of our knowledge, equations with such mixed types of nonlinearities 
have not been studied before.

Compared to most of the previous publications, we consider two modifications: the production function
is a sum of several functions, and each $f_k$ involves several delays. The situation when several (sometimes
incomparable) delays are included, is quite common, for example, transmission and translation delays in gene regulatory
systems. Motivated by this,  we apply the general results to some well-known
population dynamics equations.

The paper is organized as follows. After introducing some relevant assumptions and definitions in Section 2,
we justify existence of a global positive solution in Section 3. Section 4 deals with sufficient conditions when
all positive solutions are bounded. In Section 5, we investigate persistence of solutions and also consider their 
permanence. Section 6 explores positive unbounded solutions, and Section 7 involves brief discussion.

\section{Preliminaries}

\begin{definition}\label{def1}
We will say that $f(t,u_1,\dots,u_l)$ is {\bf a Caratheodory function}
if in its domain it is continuous in $u_1,\dots,u_l$
for almost all $t$ and is locally essentially bounded in $t$ for any
$u_1,\dots,u_l$.

The function $f(t,u_1,\dots,u_l)$ is {\bf a locally Lipschitz function}
if for any interval $[a,b]$ there exist positive constants $\alpha_k([a,b])$
such that 
$$
|f(t,u_1,\dots,u_l)-f(t,v_1,\dots,v_l)|\leq \sum_{k=1}^l \alpha_k([a,b])|u_k-v_k|,~ 
u_k,v_k\in  [a,b],~k=1, \dots, l,~t \geq 0.
$$
\end{definition}

In this paper we consider the scalar nonlinear equation with several delays
\begin{equation}\label{6}
\dot{x}(t)=\sum_{k=1}^m f_k(t, x(h_1(t)),\dots,x(h_l(t)))-g(t,x(t)), \quad t \geq 0
\end{equation}
under the following conditions:

(a1) $f_k:[0,\infty)\times \RR^{l}\rightarrow [0,\infty), g:[0,\infty)\times \RR\rightarrow [0,\infty)$ are Caratheodory and locally Lipschitz functions, $f_k(t,0,\dots,0)=0, g(t,0)=0$;

(a2) $h_j, j=1\dots,l,$ are Lebesgue measurable functions, $h_j(t)\leq t, ~ \lim_{t\rightarrow\infty} h_j(t)=\infty$.

Together  with Eq.~(\ref{6}) consider an initial condition
\begin{equation}\label{7}
x(t)=\varphi(t),~~ t\leq 0,
\end{equation} 
where

(a3) $\varphi:(-\infty,0]\rightarrow {\mathbb R}$ 
is a nonnegative Borel measurable bounded function, $ \varphi(0)>0$.

\begin{definition}\label{def2}
The solution of problem~(\ref{6}),(\ref{7}) is an absolutely continuous on $[0,\infty)$ function satisfying (\ref{6})
almost everywhere  for $t\geq 0$ and condition (\ref{7}) for $t\leq 0$.
\end{definition}

Instead of the initial point $t=0$ we can consider any initial point $t=t_0>0$.
In Definition~\ref{def2} the interval $[0,\infty)$ can be substituted by the maximum interval $(0,c)$, with 
$c>0$, or $(t_0,c)$, $c>t_0$ where the solution 
exists. However, in the present paper we only consider the case when a global positive solution exists on $[0,\infty)$.
Sufficient conditions for existence of a positive solution on $[0,\infty)$ are discussed in the next section.

\section{Existence of a Positive Solution on $[0,\infty)$}
 
Let us first justify that if (a1)-(a3) are satisfied then the positive local solution of  (\ref{6}),  
(\ref{7}) exists and is unique.

Denote by ${\bf L}^2([t_0,t_1])$ the space of Lebesgue measurable real-valued 
functions $x(t)$ such that \\ $\displaystyle Q=\int_{t_0}^{t_1} (x(t))^2~dt<\infty$, with the usual norm $\| x \|_{{\bf 
L}^2([t_0,t_1])}=\sqrt{Q}$, by ${\bf C}([t_0,t_1])$  the space of continuous on
$[t_0,t_1]$ functions with the $\sup$-norm.

The following result from the book of Corduneanu
\cite[Theorem 4.5, p. 95]{Cordun} will be applied. We recall that an operator $N$ is
{\em causal} (or {\em Volterra}) if for any two functions $x$ and $y$
and each $t$ the fact that $x(s)=y(s)$, $s \leq t$, implies
$(Nx)(s)=(Ny)(s)$, $s \leq t$.

\begin{uess}
\label{lemma1} \cite{Cordun}
Consider the equation
\begin{equation}
\label{cord}
y^{\prime}(t)=({\cal L}y)(t)+({\cal N}y)(t), ~~t\in [t_0,t_1],~ y(t_0)=y_0,
\end{equation}
where ${\cal L}: {\bf C}([t_0,t_1]) \to {\bf L}^2([t_0,t_1])$ is a linear bounded causal operator,
${\cal N}: {\bf C}([t_0,t_1]) \to {\bf L}^2([t_0,t_1])$ is a nonlinear causal 
operator which satisfies
$$
\| {\cal N}x - {\cal N}y \|_{{\bf L}^2([t_0,t_1])} \leq \lambda \| x-y \|_{{\bf C}([t_0,t_1])}
$$
for $\lambda$ sufficiently small. Then there exists a unique absolutely
continuous on $[t_0,t_1]$ solution of (\ref{cord}).
\end{uess}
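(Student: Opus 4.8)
The plan is to recast the initial value problem (\ref{cord}) as a fixed point equation for a Picard-type integral operator and to apply the Banach contraction principle, using a finite continuation along a partition of $[t_0,t_1]$ to avoid any smallness assumption on $t_1-t_0$. First I would reformulate the problem: a function $y$ is an absolutely continuous solution of (\ref{cord}) on $[t_0,t_1]$ if and only if $y \in \mathbf{C}([t_0,t_1])$ and $y = Ty$, where
$$(Ty)(t) = y_0 + \int_{t_0}^{t} \big[ (\mathcal{L}y)(s) + (\mathcal{N}y)(s) \big]\, ds, \qquad t \in [t_0,t_1].$$
Since $\mathcal{L}y, \mathcal{N}y \in \mathbf{L}^2([t_0,t_1]) \subset \mathbf{L}^1([t_0,t_1])$, the integral is a well-defined absolutely continuous function of $t$, so $T$ maps $\mathbf{C}([t_0,t_1])$ into itself (in fact into the absolutely continuous functions), and any fixed point automatically satisfies $y(t_0)=y_0$ and has the required regularity.

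Next I would establish a contraction estimate on short subintervals. For $x,y \in \mathbf{C}([t_0,t_1])$ and $t \in [t_0,t_0+\delta]$,
$$|(Tx)(t) - (Ty)(t)| \le \int_{t_0}^{t} |(\mathcal{L}x - \mathcal{L}y)(s)|\, ds + \int_{t_0}^{t} |(\mathcal{N}x - \mathcal{N}y)(s)|\, ds,$$
and by the Cauchy--Schwarz inequality the right-hand side is at most $\sqrt{\delta}\,\big( \|\mathcal{L}x - \mathcal{L}y\|_{\mathbf{L}^2([t_0,t_0+\delta])} + \|\mathcal{N}x - \mathcal{N}y\|_{\mathbf{L}^2([t_0,t_0+\delta])} \big)$. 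Causality lets me treat $\mathcal{L}$ and $\mathcal{N}$ as operators on $\mathbf{C}([t_0,t_0+\delta])$ with the same bounds $\|\mathcal{L}\|$ and $\lambda$: extend a function on $[t_0,t_0+\delta]$ to all of $[t_0,t_1]$ by its terminal value; by causality the values of $\mathcal{L}$ and $\mathcal{N}$ on $[t_0,t_0+\delta]$ do not see the extension, while the extension does not change the sup-norm. Hence $\|Tx - Ty\|_{\mathbf{C}([t_0,t_0+\delta])} \le q\,\|x-y\|_{\mathbf{C}([t_0,t_0+\delta])}$ with $q = \sqrt{\delta}\,(\|\mathcal{L}\| + \lambda)$, and choosing $\delta$ so small that $q < 1$ gives, by the Banach fixed point theorem, a unique absolutely continuous solution on $[t_0,t_0+\delta]$.

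Finally I would continue the solution step by step. Partition $[t_0,t_1]$ into $N$ intervals of length $\delta$ with $q<1$ as above. Assuming the solution is already known on $[t_0,t_0+(j-1)\delta]$, I would repeat the argument of the previous step on the closed (hence complete) subset of $\mathbf{C}([t_0,t_0+j\delta])$ consisting of functions that agree with it on $[t_0,t_0+(j-1)\delta]$: by causality $T$ preserves this set, and for any two of its elements the difference $Tx - Ty$ vanishes on $[t_0,t_0+(j-1)\delta]$, so the same constant $q<1$ controls the contraction over the new subinterval. After $N$ steps one obtains an absolutely continuous solution on all of $[t_0,t_1]$, and uniqueness on each step yields uniqueness globally.

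The main obstacle is the mismatch between the $\mathbf{L}^2$-codomain and the $\mathbf{C}$-domain of $\mathcal{L}$ and $\mathcal{N}$. It is what forces the factor $\sqrt{\delta}$ (through Cauchy--Schwarz) rather than the linear factor $\delta$ one would obtain from an $\mathbf{L}^\infty$ bound, and, more importantly, it blocks the usual Bielecki weighted-norm device, since a crude $\mathbf{L}^2$ estimate does not localize finely enough for an exponential weight to gain anything. Consequently the finite continuation and the careful use of causality to restrict $\mathcal{L}$ and $\mathcal{N}$ to subintervals without enlarging their bounds are essential to the argument rather than cosmetic.
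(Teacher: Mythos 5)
Your argument is correct, but note that the paper offers no proof of this statement at all: it is quoted verbatim from Corduneanu's book (Theorem 4.5, p.~95 of \cite{Cordun}) and used as a black box in the proof of Theorem~\ref{theorem1}, so there is nothing in the paper to compare your proof against. On its own merits, your Picard--Banach argument is sound: the equivalence of (\ref{cord}) with the fixed-point equation $y=Ty$ via absolute continuity is standard, the Cauchy--Schwarz step correctly converts the ${\bf L}^2\to{\bf C}$ mismatch into the factor $\sqrt{\delta}$, and the extension-by-terminal-value device is exactly the right way to exploit causality so that ${\cal L}$ and ${\cal N}$ act on subintervals with unchanged bounds; the stepwise continuation on the closed affine subset of functions agreeing with the already-constructed solution is also handled correctly, including the observation that $Tx-Ty$ vanishes on the earlier portion so the same contraction constant $q=\sqrt{\delta}\,(\|{\cal L}\|+\lambda)$ survives. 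In fact your continuation step proves slightly more than the lemma asserts: existence and uniqueness on all of $[t_0,t_1]$ for \emph{any} Lipschitz constant $\lambda$, whereas the statement (and, in the single-contraction form, Corduneanu's proof) only requires the conclusion when $\lambda$ is small; this is a harmless strengthening. The one point worth making explicit is global uniqueness: you should note that an arbitrary absolutely continuous solution on $[t_0,t_1]$, restricted to $[t_0,t_0+j\delta]$ and extended by its terminal value, is by causality a fixed point of the restricted operator, and hence coincides step by step with the solution you construct; this is implicit in your last sentence but deserves a line.
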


Let us note that in Lemma~\ref{lemma1}, ${\cal L}$ and ${\cal N}$ are causal operators and thus can include delays.
They are defined on ${\bf C}([t_0,t_1])$ which corresponds to delay equations with the zero initial function
for $t<t_0$. For an arbitrary initial function and $t_0=0$, in the proof of Theorem~\ref{theorem1} we reduce 
the problem to the zero initial function  and $t \geq 0$ only.

\begin{guess}
\label{theorem1}
Suppose (a1)-(a3) hold. Then there exists a unique local positive solution of
(\ref{6}),  (\ref{7}).
\end{guess}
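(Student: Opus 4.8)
The plan is to reduce the initial value problem~(\ref{6}),(\ref{7}) on a small interval $[0,t_1]$ to an equation of the form~(\ref{cord}) with \emph{zero} initial data, so that Lemma~\ref{lemma1} applies directly. First I would choose $t_1>0$ small enough that all the delayed arguments are controlled: by (a2) each $h_j$ satisfies $h_j(t)\le t$, and we may pick $t_1$ so that on $[0,t_1]$ the ``history part'' of every $h_j$ (the set where $h_j(t)\le 0$) is separated from the ``current part''. Concretely, write $x(h_j(t))=\chi_{\{h_j(t)\le 0\}}(t)\,\varphi(h_j(t))+\chi_{\{h_j(t)>0\}}(t)\,x(h_j(t))$; the first summand is a known bounded measurable function of $t$ (call it a forcing term), and the second involves $x$ only at points in $(0,t_1]$, hence defines a causal operator on ${\bf C}([0,t_1])$ once we extend functions by $0$ for $t<0$. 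Substituting this decomposition into $\sum_k f_k$ and into $g(t,x(t))$ splits the right-hand side into a fixed ${\bf L}^2$ forcing term plus a causal nonlinear operator of $x$.

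Next I would linearize to isolate a bounded linear causal operator ${\cal L}$ and put the remainder into ${\cal N}$, although for this local-existence result one can in fact absorb everything into ${\cal N}$ (take ${\cal L}=0$), since we only need the small-Lipschitz hypothesis. Using (a1): $f_k$ and $g$ are locally Lipschitz, so on the compact range of values involved (note $\varphi$ is bounded by (a3), and we restrict attention to a bounded ball in ${\bf C}([0,t_1])$ around the constant $\varphi(0)$), there are Lipschitz constants $\alpha_{k,j}([a,b])$, $\beta([a,b])$. For $x,y$ in such a ball,
$$
\|{\cal N}x-{\cal N}y\|_{{\bf L}^2([0,t_1])}\le \left(\sum_{k=1}^m\sum_{j=1}^l \alpha_{k,j}+\beta\right)\sqrt{t_1}\;\|x-y\|_{{\bf C}([0,t_1])},
$$
because each composition with a measurable delay $h_j$ does not enlarge the sup-norm and the $\sqrt{t_1}$ comes from passing from the sup-norm to the ${\bf L}^2([0,t_1])$-norm. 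Hence by shrinking $t_1$ the Lipschitz constant $\lambda$ becomes as small as required, and Lemma~\ref{lemma1} yields a unique absolutely continuous solution on $[0,t_1]$.

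Finally I would establish \emph{positivity} of this local solution. Since $x(0)=\varphi(0)>0$ and $x$ is continuous, $x(t)>0$ on some maximal subinterval $[0,\sigma)$; on that subinterval, integrating~(\ref{6}) and using $f_k\ge 0$ together with $g(t,0)=0$ and the local Lipschitz bound $g(t,x(t))\le \beta\, x(t)$, one gets $\dot{x}(t)\ge -\beta x(t)$, whence $x(t)\ge \varphi(0)e^{-\beta t}>0$; this prevents $x$ from reaching $0$, so $\sigma\ge t_1$ and the solution is positive on all of $[0,t_1]$. The main obstacle, and the step deserving the most care, is the bookkeeping in the first paragraph: correctly splitting each delayed term into a known forcing term plus a genuinely causal operator on ${\bf C}([0,t_1])$, and checking that after extending the unknown by $0$ for $t<0$ the operator ${\cal N}$ still maps into ${\bf L}^2([0,t_1])$ and inherits the small Lipschitz constant uniformly on the relevant ball — everything else is a routine application of Lemma~\ref{lemma1} and a Gronwall-type lower bound.
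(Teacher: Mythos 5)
Your proposal follows essentially the same route as the paper: the same splitting of each delayed term $x(h_j(t))$ into a known forcing part coming from $\varphi$ (where $h_j(t)\le 0$) plus a causal part (where $h_j(t)>0$), the same choice ${\cal L}=0$ with everything absorbed into ${\cal N}$, and the same $\sqrt{t_1}$-Lipschitz estimate feeding into Lemma~\ref{lemma1}. The only cosmetic difference is that you prove positivity via the Gronwall-type bound $x(t)\ge \varphi(0)e^{-\beta t}$, whereas the paper simply invokes continuity and $x(0)>0$ to shrink the interval; both are adequate for a local statement.
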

\begin{proof} 
In order to reduce (\ref{6}),  (\ref{7}) to the equation 
which will be considered for $t \geq 0$, we rewrite this problem as 
\begin{equation}\label{8} 
\dot{x}(t)=\sum_{k=1}^m f_k(t, x^{h_1}(t)+\varphi_{h_1}(t), \dots, x^{h_l}(t)+\varphi_{h_l}(t))-g(t,x(t)), 
\end{equation}
where
$$
x^{h_j}(t)=\left\{\begin{array}{ll}
x(h_j(t)),& h_j(t)> 0,\\
0,& h_j(t) \leq  0,\end{array}\right.,~~
\varphi_{h_j}(t)=\left\{\begin{array}{ll}
\varphi(h_j(t)),& h_j(t)\leq 0,\\
0,& h_j(t)> 0. \end{array}\right.
$$
We can consider (\ref{8}) for $t \geq 0$ only (which corresponds to the zero initial condition).

Denote $({\cal L}x)(t)\equiv 0$, 
$\displaystyle
({\cal N}x)(t)=\sum_{k=1}^m f_k(t, x^{h_1}(t)+\varphi_{h_1}(t),\dots, x^{h_l}(t)+\varphi_{h_l}(t))-g(t,x(t)).
$
We have for any $t_0>0$
\begin{eqnarray*}
\| {\cal N}x - {\cal N}y \|_{{\bf L}^2([0,t_0])} & \leq & 
 \left\|\sum_{k=1}^m f_k(\cdot, x^{h_1}(\cdot)+\varphi_{h_1}(\cdot),\dots, x^{h_l}(\cdot)+\varphi_{h_l}(\cdot))
\right. \\ & & \left. 
-\sum_{k=1}^m f_k(\cdot, y^{h_1}(\cdot)
+\varphi_{h_1}(\cdot),\dots, y^{h_l}(\cdot)+\varphi_{h_l}(\cdot))\right\|_{{\bf L}^2([0,t_0])}
\\ & &
+\|g(\cdot,x(\cdot))-g(\cdot,y(\cdot))\|_{{\bf L}^2([0,t_0])}
\\
&  \leq & \sum_{k=1}^m \sum_{j=1}^l \alpha_k^j\|x^{h_j}(\cdot)-y^{h_j}(\cdot)\|_{{\bf L}^2([0,t_0])}
+\beta \|x-y\|_{{\bf L}^2([0,t_0])},
\end{eqnarray*}
where $\alpha_k^j=\alpha_k^j([0,t_0])$, $\beta=\beta([0,t_0])$  are local Lipschitz constants for $f_k$ and 
$g$, respectively.

Then
$$
\| {\cal N}x - {\cal N}y \|_{{\bf L}^2([0,t_0])} \leq
\left(\sum_{k=1}^m \sum_{j=1}^l \alpha_k^j+\beta\right) \sqrt{ t_0} \|x-y\|_{{\bf C}^2([0,t_0])}.
$$
Hence if $t_0$ is sufficiently small, the constant
$\lambda=(\sum_{k=1}^m \sum_{j=1}^l \alpha_k^j+\beta) \sqrt{t_0} $
is also sufficiently small. Thus by Lemma~\ref{lemma1}  there exists a unique
solution of problem (\ref{6}),  (\ref{7}) on $[0,t_0]$. Since $x(0)>0$,
for small $t_0$ this solution is positive, which concludes the proof.
\end{proof}

\begin{guess}\label{theorem2} 
Suppose conditions (a1)-(a3) are satisfied
and at least one of the following assumptions holds:

$(a4_1)$  for any $[a,b]$ there exists 
$\varepsilon>0$ such that $h_j(t)\leq b-\varepsilon$ for $t \in [a,b]$, $j=1,\dots,l$; 

$(a4_2)$  $\displaystyle 0\leq f_k(t, u_1,\dots,u_l)\leq \sum_{j=1}^l a_{kj}(t)u_j+b_k(t)$,
where $a_{kj}, b_k$ are locally integrable functions;

$(a4_3)$ for $x$ sufficiently large $\displaystyle g(t,x)-\sum_{k=1}^m f_k(t, u_1,\dots,u_l)\geq a_x>0, x\geq u_j, j=1,\dots,l$.

Then problem (\ref{6}),  (\ref{7}) has a unique positive global solution on $[0,\infty)$.
\end{guess}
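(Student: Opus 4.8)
The plan is to turn the local solution provided by Theorem~\ref{theorem1} into a global one by a continuation argument, with each of the alternatives $(a4_1)$--$(a4_3)$ used to rule out blow-up in finite time. Let $[0,c)$ be the maximal interval on which the unique positive solution of~(\ref{6}),~(\ref{7}) exists. First I would check that it stays positive on $[0,c)$: if $t_1\in(0,c)$ were its first zero, then on the compact interval $[0,t_1]$ the solution is bounded, say by $B$, and since $g(t,0)=0$ and $g$ is locally Lipschitz, $0\le g(t,x(t))\le\gamma\,x(t)$ there with $\gamma=\gamma([0,B])$; as $f_k\ge0$, this gives $\dot x(t)\ge-\gamma x(t)$, hence $x(t)\ge x(0)e^{-\gamma t}>0$ on $[0,t_1]$, contradicting $x(t_1)=0$. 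Uniqueness on $[0,c)$ follows by re-applying Theorem~\ref{theorem1} from successive initial points. It remains to show $c=\infty$, and for this I use the standard blow-up alternative: if $c<\infty$, the solution must be unbounded on $[0,c)$, since otherwise all the delayed values $x(h_j(t))$, $t\in[0,c)$, would lie in a bounded set (here I use that $\varphi$ is bounded and $h_j(t)\le t<c$), and then the Lipschitz estimate $|f_k(t,u)|=|f_k(t,u)-f_k(t,0)|\le\big(\sum_j\alpha_k^j\big)\max_j|u_j|$ together with $g\ge0$ would make the right-hand side of~(\ref{6}) essentially bounded on $[0,c)$, so the solution could be continued past $c$. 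Thus it suffices to derive a contradiction from ``$c<\infty$ and $x$ unbounded on $[0,c)$'' under each of $(a4_1)$--$(a4_3)$.

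Under $(a4_1)$ the method of steps applies: take $\varepsilon>0$ with $h_j(t)\le c-\varepsilon$ for all $t\in[0,c]$ and all $j$; then for $t\in[c-\varepsilon,c)$ the values $x(h_j(t))$ are values of the solution on $(-\infty,c-\varepsilon]$, which is a bounded set, so by the Lipschitz estimate $\sum_k f_k(t,\dots)\le M$ on $[c-\varepsilon,c)$, whence $\dot x(t)\le M$ and $x(t)\le x(c-\varepsilon)+M\varepsilon$ there --- contradicting unboundedness. Under $(a4_2)$ a delay Gronwall inequality does the job: since $x>0$ all the arguments $u_j=x(h_j(t))$ are nonnegative and $h_j(t)\le t$, so with $m(t):=\sup_{s\le t}x(s)$ one has $\dot x(t)\le A(t)m(t)+B(t)$ a.e., where $A:=\sum_{k,j}|a_{kj}|$ and $B:=\sum_k|b_k|$ are locally integrable; integrating and passing to the running supremum gives $m(t)\le m(0)+x(0)+\int_0^t A(s)m(s)\,ds+\int_0^t B(s)\,ds$, and Gronwall's lemma yields $m(t)\le\big(m(0)+x(0)+\int_0^t B\big)\exp\!\big(\int_0^t A\big)$, which is finite for $t<c<\infty$ --- again a contradiction.

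The case $(a4_3)$ is the one I expect to need the most care. Let $N_0$ be the threshold from $(a4_3)$ and put $Q:=\max\{N_0,\sup_{s\le0}\varphi(s),x(0)\}$. If $x$ is unbounded on $[0,c)$, then for every $v>Q$ the first passage time $\sigma=\sigma(v):=\inf\{t\in[0,c):x(t)\ge v\}$ is finite, $x(\sigma)=v$, and $x(s)<v$ for all $s<\sigma$; consequently $x(\sigma)\ge x(h_j(\sigma))$ for all $j$ (because $h_j(\sigma)\le\sigma$, with $\varphi\le Q<v$ if $h_j(\sigma)\le0$) and $x(\sigma)=v>N_0$, so $(a4_3)$ gives $\dot x(\sigma)=\sum_k f_k(\sigma,\dots)-g(\sigma,x(\sigma))\le-a_v<0$ wherever $\dot x(\sigma)$ exists. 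On the other hand $x$ attains at $\sigma$ its maximum over $(-\infty,\sigma]$, so its left difference quotients at $\sigma$ are nonnegative and hence $\dot x(\sigma)\ge0$ wherever it exists --- a contradiction. The only delicate point is that an absolutely continuous function need not be differentiable at a given record point $\sigma$; this is remedied by choosing $v$ outside a Lebesgue-null set, which is legitimate because $v\mapsto\sigma(v)$ is essentially the inverse of the running supremum $t\mapsto\sup_{s\le t}x(s)$, and the running supremum of an absolutely continuous function is again absolutely continuous (with derivative dominated by $|\dot x|$), so that $\dot x(\sigma(v))$ exists for almost every $v>Q$. In all three cases $c=\infty$, which proves the theorem.
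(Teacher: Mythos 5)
Your proof is correct, and it follows the same overall skeleton as the paper's (local solution from Theorem~\ref{theorem1}, positivity via $\dot x\ge -g(t,x)$ and $g(t,0)=0$, the blow-up alternative reducing everything to boundedness on $[0,c)$, then a case analysis), but two of the three cases are handled by genuinely different arguments. For $(a4_1)$ the two proofs coincide. For $(a4_2)$ the paper compares $x$ with the solution of the majorizing linear delay equation and then, via a running-maximum observation, with a scalar ODE $\dot y=a(t)y+b(t)$; your direct Gronwall inequality for the running supremum $m(t)=\sup_{s\le t}x(s)$ is an equivalent but cleaner packaging of the same estimate (and your replacement of $a_{kj},b_k$ by their absolute values is legitimate since the delayed arguments are nonnegative). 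The real divergence is in $(a4_3)$: the paper exploits the local Lipschitz continuity of $g$ to show that the inequality of $(a4_3)$ persists, with constant $a_A/2$, on a whole band $x(t)\in(M-\varepsilon,M)$, and then derives a contradiction by integrating $\dot x\le -a_A/2$ over the crossing interval $(t_1,t_0)$ --- an argument that never needs $\dot x$ to exist at any prescribed point. You instead apply $(a4_3)$ at a single first-passage time $\sigma(v)$ and must therefore confront the fact that an absolutely continuous function need not satisfy the equation, or even be differentiable, at a preassigned point; your resolution --- that $\{v:\sigma(v)\in E\}\subseteq m(E)$ for the null set $E$ of bad times, and that the running supremum $m$ is absolutely continuous and hence has Luzin's property (N), so almost every $v>Q$ is usable --- is correct and complete (you should also throw into $E$ the null set where equation~(\ref{6}) itself fails, not only where $\dot x$ fails to exist, but that is the same argument). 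Your route buys a proof that uses $(a4_3)$ only pointwise and does not invoke the Lipschitz continuity of $g$ at this step; the paper's route buys an entirely elementary argument at the cost of an $\varepsilon$--$\alpha$ bookkeeping with the Lipschitz constant. Both are valid.
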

\begin{proof}
By Theorem~\ref{theorem1} there exists a unique local positive solution of this problem. 
Suppose $[0,c)$ is a maximum interval of existence for this solution.
Since $\dot{x}(t)\geq -g(t,x(t))$, 
$x(0)>0$ and $g(t,0)=0$, we have $x(t)>z(t) \equiv 0$, $t\in[0,c)$,
where $z$ is a solution of $\dot{z}(t)=-g(t,z(t))$, $z(0)=0$,
and the solution $z$ is unique due to the local Lipschitz condition for $g$ as a part of (a1).

If $c=+\infty$ the theorem is proved. Suppose $c<\infty$. 

Let us first verify that $\liminf\limits_{t\rightarrow c-} x(t)>0$. 
By (a1) and continuity of the solution on $[0,c]$,
there exists $M>0$ such that $g(t,x(t)) \leq M$, $t\in [0,c]$. Following the above argument, we obtain
$\dot{x}(t)\geq -g(t,x(t))>-M$ and
$x(t)>x(0)e^{-Mc}$, which implies $\liminf_{t\rightarrow c-} x(t)>0$.
Thus
$\limsup\limits_{t\rightarrow c-} x(t)=+\infty$.

In fact, assuming the contrary that $\limsup_{t\rightarrow c-} x(t)<+\infty$ then, by (a3),
there exists $M_0>0$ such that $0 \leq x(t) \leq M_0$ on $[0,c)$ and $\varphi(t) \leq M_0$.
Since $f_k(\cdot, u_1,\dots,u_l)$ and $g(\cdot,u)$ are locally Lipschitz, they are locally essentially bounded for $t \in [0,c]$, 
$u,u_j \in [0,M_0]$,
thus $\dot{x}$ is also essentially bounded on $[0,c)$.

The solution satisfies
$$ x(t)= x(0)+ \int_0^t \dot{x}(s)~ds, ~~x(c)= x(0)+ \int_0^c \dot{x}(s)~ds, $$
thus the solution can be defined for $t \geq c$ and $[0,c)$ is not the maximum interval of existence. 
Thus, when justifying existence, we only need to prove boundedness of a solution on any finite interval.
 
Consider now the three cases.

1) Suppose $(a4_1)$ holds. Then there exists $t_0<c$ such that $h_j(t)\leq t_0,~ t\in [0,c)$, 
thus $|x(h_j(t))|\leq \max_{t\in [0,t_0]} |x(t)|<\infty$, hence
$$
0< x(t)\leq |x(0)|+\sum_{k=1}^m \int_0^c 
\sup_{0\leq s\leq t} f_k\left( s, x(h_1(s)),\dots,x(h_l(s)) \right) ds =A<\infty, ~~t \in [0,c),
$$
and therefore $\limsup_{t\rightarrow c-} x(t)=+\infty$ is impossible.

2) Suppose $(a4_2)$ holds. Then
$$
\dot{x}(t)\leq \sum_{k=1}^m \left[ \sum_{j=1}^l a_{kj}(t) x(h_j(t))+b_k(t) \right].
$$
If $\lim_{t \to c^-} x(t)=+\infty$ then there is $t_0 \in (0,c)$ such that 
\begin{equation}
\label{max1}
x(t_0)=\max_{s \leq t_0} x(s).
\end{equation}
Since $x(t)$ is positive on $(0,c)$,  on $[t_0,c)$ it does not exceed
the solution of the equation
$$\dot{z}(t)=\sum_{k=1}^m \left[ \sum_{j=1}^l a_{kj}(t) z(h_j(t))+b_k(t) \right], ~~z(t)=x(t), ~~t \leq t_0.$$
The function $z(t)$ is monotone nondecreasing on $[t_0,c)$, and from (\ref{max1}),
$$
z(t)=\max_{0 \leq s \leq t} z(s),~~z(t)\geq  z(h_j(t)),  ~~t \in [t_0,c), ~j=1, \dots, l.
$$ 
Thus $x(t) \leq y(t)$, 
where $y$ is a solution of the equation
$$\dot{y}(t) =a(t)y(t)+b(t),~~y(t_0)=x(t_0),~~a(t):=\sum_{k=1}^m \sum_{j=1}^l a_{kj}(t),
~b(t):=\sum_{k=1}^m b_k(t), ~~t\in [t_0,c) .$$
Hence
$$x(t) \leq y(t) \leq \int_{t_0}^c b(s) \exp \left\{ \int_{t_0}^c a(\tau)~d \tau \right\}~ds
+x(t_0)\exp\left\{ \int_{t_0}^c a(\tau)~d \tau \right\} =A< \infty, ~t \leq c,$$ 
since $a_{kj}$ and $b_k$ are integrable on $[t_0,c]$, and therefore there is a positive solution on $[0,\infty)$.

3) Suppose $(a4_3)$ holds. 
Since $(a4_3)$ is satisfied for $x$ large enough, we can find $A>0$ such that
the inequality in $(a4_3)$ holds for $x \geq u_j \geq A$, $j=1,\dots, m$. 
Let us choose $M \geq 2A$, $\displaystyle M \geq A+\sup_{t \leq 0} \varphi(t)$.
The function $g$ is locally Lipschitz, thus there is $\alpha>0$ such that 
$|g(t,x)-g(t,y)| \leq \alpha |x-y|$, $x,y \in [0,M]$, for any $t$.
We recall that there is $a_A$ such that $g(t,x)-\sum_{k=1}^m f_k(t, u_1,\dots,u_l)\geq a_A>0$ 
for $x\geq u_j \geq A$.

Denote $\varepsilon = \min\{ a_A/(2 \alpha), A \}$ then for $x,y \in [0,M]$, $|x-y|<\varepsilon$,
$$g(t,y) \geq g(t,x)-|g(t,x)-g(t,y)| \geq  g(t,x)-\alpha |x-y| \geq g(t,x)-\alpha \varepsilon \geq g(t,x)-\frac{a_A}{2}.$$ 
Therefore for $y \geq u_j-\varepsilon$, $j=1,\dots,l$, $x,y \in [0,M]$, $|x-y|\leq \varepsilon$
\begin{equation}
\label{auxil1}
g(t,y)-\sum_{k=1}^m f_k(t, u_1,\dots,u_l)\geq a_M/2>0.
\end{equation}
If $x(t) \geq M$ for some $t \in (0,c)$, denote 
$$t_0 =\inf \left\{ t \in [0,c] \left| x(t)=M \right. \right\}, ~~ 
t_1= \sup \left\{ t \in [0,t_0] \left| x(t)=M-\varepsilon \right. \right\}. $$
By definition $t_0>t_1>0$, and from continuity of $x$, $x(t_1)=M-\varepsilon$, $x(t_0)=M$, $x(h_j(t)) <M$, $t <t_0$, $j = 1, \dots, l$ and 
$x(t) \in (M-\varepsilon,M)$, $t \in (t_1,t_0)$. However, (\ref{auxil1}) implies
$$
\dot{x}(t)=\sum_{k=1}^m f_k(t, x(h_1(t)),\dots,x(h_l(t)))-g(t,x(t))\leq -a_M/2<0,~~t \in (t_1,t_0),
$$
thus  $x(t_1)> x(t_0)$, which contradicts to the assumption $x(t_1)=M-\varepsilon <x(t_0)=M$. 
Thus $x(t) \leq M$, $t\in [0,c]$; in fact, the inequality is satisfied for any $t$.
Hence a positive solution exists on $[0,\infty)$.
\end{proof}

\begin{remark}
The conditions of Theorem~\ref{theorem2} and \cite[Theorem 2.2]{JMAA2014}
are independent.
\end{remark}

\begin{example}\label{example1}
For the equation
\begin{equation}\label{7a}
\dot{x}(t)=x^2(t-\tau), ~~\tau>0
\end{equation}
condition $(a4_1)$ holds and $(a4_2), (a4_3)$ fail. It is interesting to note that
the equation $\dot{x}(t)=x^2$ with the initial condition $x(0)=x_0>0$ has the solution $x(t)=1/(x_0^{-1}-t)$
which only exists on $[0, 1/x_0)$.

For the equation
\begin{equation}\label{7b}
\dot{x}(t)=x(t-|\sin t|), 
\end{equation}
condition $(a4_2)$ holds and $(a4_1), (a4_3)$ fail.

For the equation
\begin{equation}\label{7c}
\dot{x}(t)=\frac{x^2(t-|\sin t|)}{1+x^2(t)}-x^3(t)
\end{equation}
condition $(a4_3)$ holds and $(a4_1), (a4_2)$ fail.

By Theorem \ref{theorem2}, problems for Eqs. (\ref{7a})-(\ref{7c}) with an initial function satisfying (a3), 
have a unique positive global solution. 
\end{example}

For the rest of the paper, we everywhere assume that problem (\ref{6}), (\ref{7}) 
has a unique positive global solution on $[0,\infty)$.

\section{Boundedness of Solutions}

Let us consider conditions under which all global solutions of (\ref{6}), (\ref{7}) are bounded.

\begin{guess}\label{theorem4}
Suppose conditions (a1)-(a3) hold.
Let also one of the following conditions be satisfied:

(a) $f_k(t,u_1,\dots,u_l)$ are strictly monotone increasing in $u_1,\dots, u_{l}$,
$$
\limsup_{u\rightarrow\infty}\frac{\sum_{k=1}^m f_k(t,u,\dots,u)}{g(t,u)}<1
$$
uniformly in $t$;

(b)   $f_k(t,u_1,\dots, u_l)$  are strictly monotone increasing in $u_j$ for some $j \in \{ 1,2, \dots, l \}$ and 
$$
\limsup_{u_j \rightarrow \infty}\frac{\sum_{k=1}^m f_k(t,u_1,\dots,u_l)}{g(t,u_j)}<1
$$
uniformly in $t$, $u_1,\dots, u_{j-1}, u_{j+1}\dots, u_l$.

Then any solution of  problem (\ref{6}),  (\ref{7}) is bounded.

If the following condition holds:

(c) $f_k(t,u_1,\dots, u_l)$  are strictly monotone increasing in $u_1,\dots,u_n$ for some $n \in \{ 1,2, \dots, l \}$ and there exists 
$M_0>0$
such that for any $M_1\geq M_0,\dots,M_{l-n}\geq M_0$
$$
\limsup_{u\rightarrow\infty}\frac{\sum_{k=1}^m f_k(t,u,\dots,u, M_1,\dots,M_{l-n})}{g(t,u)}<1
$$
uniformly in $t$, then there is no solution $x$ of  problem (\ref{6}),(\ref{7}) such that 
$\lim\limits_{t\rightarrow\infty} x(t)= \infty.$
\end{guess}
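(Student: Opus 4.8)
The plan is to prove all three assertions by contradiction, using the same device in each case: if a global positive solution is unbounded (or, for (c), tends to $+\infty$), then it must attain its running maximum at a sequence of times $\tau_n\to\infty$ at which, in addition, the derivative is \emph{strictly} positive; evaluating the equation at such a ``record'' time and using monotonicity of the $f_k$ in the relevant variables together with the uniform $\limsup$ bound forces $\dot x(\tau_n)\le 0$, a contradiction.

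\textbf{Step 1 (record points).} Let $x$ be a global positive solution with $\limsup_{t\to\infty}x(t)=+\infty$; this holds whenever $x$ is unbounded, since $x$ is continuous hence bounded on compacts, and a fortiori if $x(t)\to\infty$. Put $m(t)=\sup_{0\le s\le t}x(s)$, so $m$ is continuous, nondecreasing, and $m(t)\to\infty$. If $m(b)>m(a)$ then, $x$ being continuous, $m(b)=x(s^*)$ for some $s^*\in(a,b]$, whence $m(b)-m(a)\le x(s^*)-x(a)\le\int_a^b|\dot x|$; thus $m$ is locally absolutely continuous, $\int_T^\infty\dot m=\infty$ for every $T$, and so $\{t>T:\dot m(t)>0\}$ has positive measure for every $T$. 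Intersecting with the full-measure set where $\dot x$ exists yields $\tau_n\uparrow\infty$ at which $\dot x(\tau_n)$ exists and $\dot m(\tau_n)>0$. At any such $\tau$ one must have $x(\tau)=m(\tau)$ (otherwise $x<m$ near $\tau$ and $m$ is locally constant, contradicting $\dot m(\tau)>0$); and comparing the difference quotients of $x$ and $m$ on the two sides of $\tau$, using $x\le m$ and $x(\tau)=m(\tau)$, gives $\dot x(\tau)=\dot m(\tau)>0$. Since $h_j(t)\to\infty$, for large $n$ we have $h_j(\tau_n)\in[0,\tau_n]$ and therefore
$$
x(\tau_n)=m(\tau_n)\to\infty,\qquad \dot x(\tau_n)>0,\qquad x(h_j(\tau_n))\le x(\tau_n)\quad (j=1,\dots,l).
$$

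\textbf{Step 2 (closing the cases).} In case (a) each $f_k(t,\cdot)$ is nondecreasing in every argument, so $\sum_k f_k(\tau_n,x(h_1(\tau_n)),\dots,x(h_l(\tau_n)))\le\sum_k f_k(\tau_n,x(\tau_n),\dots,x(\tau_n))$; the uniform $\limsup$ hypothesis provides $q<1$ and $U$ with $\sum_k f_k(t,u,\dots,u)\le q\,g(t,u)$ for all $t$ and $u\ge U$, and taking $u=x(\tau_n)\ge U$ (valid for large $n$) gives $\dot x(\tau_n)\le(q-1)g(\tau_n,x(\tau_n))\le 0$, a contradiction. In case (b) only the $j$-th argument is monotone: replacing $x(h_j(\tau_n))\le x(\tau_n)$ by $x(\tau_n)$ in the $j$-th slot and leaving the other delayed values untouched, and then applying the bound $\sum_k f_k(t,u_1,\dots,u_l)\le q\,g(t,u_j)$ (valid for $u_j\ge U$, \emph{uniformly in $t$ and in the remaining $u_i$}) with $u_i=x(h_i(\tau_n))$, $i\ne j$, and $u_j=x(\tau_n)$, again yields $\dot x(\tau_n)\le(q-1)g(\tau_n,x(\tau_n))\le 0$ — impossible. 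In case (c), assume $x(t)\to\infty$; then $x(h_j(\tau_n))\to\infty$ for every $j$, so for large $n$ the last $l-n$ delayed values exceed $M_0$. Using monotonicity in the first $n$ arguments to raise $x(h_1(\tau_n)),\dots,x(h_n(\tau_n))$ up to $x(\tau_n)$ and applying the hypothesis with $M_i:=x(h_{n+i}(\tau_n))\ge M_0$ and $u=x(\tau_n)$ large, we get $\sum_k f_k(\tau_n,\dots)\le q\,g(\tau_n,x(\tau_n))$ and once more $0<\dot x(\tau_n)\le(q-1)g(\tau_n,x(\tau_n))\le 0$; hence no solution tends to $+\infty$.

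\textbf{Expected main obstacle.} The analytic core is Step 1: producing, for an unbounded solution, times $\tau_n\to\infty$ at which $x$ simultaneously realizes its running maximum \emph{and} has a strictly positive derivative. An unbounded but, say, eventually increasing solution has no ``peak'' where $\dot x$ is a priori controlled, and trying to substitute approximate maxima forces $\varepsilon$-perturbations of the monotonicity step that interact badly with $g$ (which here need be neither monotone nor bounded below as $t\to\infty$), so one does not get the clean inequality $0<\dot x\le(q-1)g\le 0$. The running-maximum argument avoids this entirely. A secondary delicate point is the precise meaning of ``uniformly'' in hypothesis (c): the argument needs the constant $q<1$ and the $\limsup$-threshold to be independent not only of $t$ but also of $M_1,\dots,M_{l-n}\ge M_0$, which one should read as part of the assumption. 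The remaining work — the monotonicity substitutions and bookkeeping over the $l$ delays — is routine.
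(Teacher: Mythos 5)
Your proof is correct, but it takes a genuinely different technical route from the paper's. The paper argues on a ``first crossing'' interval: for an unbounded solution it fixes a large threshold $A$ and a level $M>A$, takes $t_1$ to be the first time $x$ reaches $M$ and $t_0$ the last time before $t_1$ at which $x=A$, uses monotonicity together with $x(h_j(t))<M$ for $t<t_1$ to bound the right-hand side by $g(t,M)-g(t,x(t))$ on $[t_0,t_1]$, and then compares with the scalar ODE $\dot y(t)=g(t,M)-g(t,y(t))$, $y(t_0)=A$; the local Lipschitz condition on $g$ (uniqueness of the solution through $(t^{\ast},M)$) forces $y<M$, hence $x(t_1)<M$, a contradiction. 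Your record-point lemma replaces all of that: you extract points $\tau_n\to\infty$ at which $x$ equals its running maximum and $\dot x(\tau_n)>0$, and read the sign contradiction directly off the equation. This dispenses with the ODE comparison and the uniqueness step (so at this stage you lean only on absolute continuity of $x$, not on the Lipschitz hypothesis), at the price of the measure-theoretic work showing that $m(t)=\sup_{0\le s\le t}x(s)$ is locally absolutely continuous with $\dot m>0$ on a set of positive measure beyond every $T$ --- which you carry out correctly, including the identity $\dot x(\tau)=\dot m(\tau)$ at such points. The monotonicity substitutions and the uniform reading of the $\limsup$ hypotheses (in particular, uniformity in $M_1,\dots,M_{l-n}\ge M_0$ in case (c)) coincide with the paper's. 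Two trivial remarks: choose $\tau_n$ also in the full-measure set where the differential equation itself holds, not merely where $\dot x$ exists; and in case (c) your index $n$ for $\tau_n$ clashes notationally with the $n$ counting the monotone arguments.
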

\begin{proof} 
Suppose that condition (a) holds and $x$ is an unbounded solution of problem (\ref{6}),  (\ref{7}). 
Let $A>\sup_{t\leq 0} \varphi(t)>0$ be a large number such that 
for some $\sigma >0$,  $\sum_{k=1}^m f_k(t,u,\dots,u) \leq (1-\sigma) g(t,u)$ for $u>A$. 
As $x$ is unbounded, for any fixed $M>A$ there exist points $t$ such that $x(t) \geq M$. 
Denote
$$t_1= \inf \{ t \geq 0 | x(t) \geq M \},$$
then $t_1>0$ as $M>A>\sup_{t\leq 0} \varphi(t)>0$ and $x(t)<M$ for $t \leq t_1$.
Let
$$
t_0 = \sup \{ t \leq t_1 | x(t) \leq A \}.$$
Since $x(t)<A$ for $t \leq 0$, we have $t_0>0$; by definition, $t_0<t_1$.
Also, $x(t) \geq A$ on $[t_0,t_1]$ with $A=x(t_0)<x(t_1)=M$.

Since $f_k$ are increasing in $u_j$ and $x(h_j(t)) <M$ on $(-\infty,t_1)$, we have
\begin{eqnarray*} 
& & \sum_{k=1}^m f_k(t, x(h_1(t)),\dots,x(h_l(t)))-g(t,x(t)) 
\\
& <  & \sum_{k=1}^m f_k(t, M,\dots,M)-g(t,x(t))
\\
& \leq & (1-\sigma) g(t,M)- g(t,x(t)) 
\\
& \leq &  g(t,M) - g(t,x(t)),~~
t \in [t_0,t_1]. 
\end{eqnarray*}
Thus the solution $x(t)$ of (\ref{6}),(\ref{7}) on $[t_0,t_1]$ does not exceed the solution of the initial value problem for the ordinary differential equation
\begin{equation}
\label{ordinary}
\dot{y}(t)= g(t,M)- g(t,y(t)), ~~y(t_0)=A<M,
\end{equation}
i.e. $x(t) \leq y(t)$, $t \in [t_0,t_1]$.
However, the solution of (\ref{ordinary}) satisfies $y(t)<M$, $t\geq t_0$. In fact, assuming the contrary, we obtain that $y(t^{\ast})=M$ for some $t^{\ast}>t_0$,
and there are two solutions through $(t^{\ast},M)$: $y$ and the one identically equal to $M$. This contradicts to the assumption of the local Lipschitz condition
which implies uniqueness. Thus $x(t_1) \leq y(t_1)<M$, and the contradiction with $x(t_1)=M$ proves boundedness of the solution $x$ of (\ref{6}),(\ref{7}).   

If condition (b) holds, the proof is similar to the previous case. Let for some $\sigma >0$,  $\sum_{k=1}^m f_k(t,u_1,\dots,u_l) \leq (1-\sigma) g(t,u_j)$ for $u_j>A$. Defining $A,M,t_0,t_1$ as previously, we obtain
\begin{eqnarray*}
&  & \sum_{k=1}^m f_k(t, x(h_1(t)),\dots,x(h_l(t)))-g(t,x(t)) \\
& < & \sum_{k=1}^m f_k(t, x(h_1(t)),\dots,x(h_{j-1}(t)),M,x(h_{j+1}(t)),
x(h_l(t)))-g(t,x(t)) \\
& \leq & (1-\sigma) g(t,M)- g(t,x(t)) \\
& \leq & g(t,M)- g(t,x(t)), ~~t \in [t_0,t_1].
\end{eqnarray*}
Again, comparing the solution $x(t)$ of (\ref{6}),(\ref{7}) on $[t_0,t_1]$ with the solution of (\ref{ordinary}) satisfying $y(t_1)<M$,
we obtain the contradiction $x(t) \leq y(t_1)<M$ with the assumption $x(t_1)=M$.

Finally, assume that condition (c) holds. 
Let $x$ be a solution of problem (\ref{6}),(\ref{7}) satisfying $\lim\limits_{t \to \infty} x(t)=\infty$. Since $\lim\limits_{t \to \infty} 
h_k(t)=\infty$, $k=1,\dots,l$, there exists $t_2 \geq 0$ such that $x(h_{n+1}(t))\geq M_0,\dots,x(h_l(t))\geq M_0$ for  $t\geq t_2$.

In addition, there is a number $A$, $A>\sup_{t\leq 0} \varphi(t)>0$, $A>\sup_{t\in [0,t_2]} x(t)>0$ such that 
$$\sum_{k=1}^m f_k(t,u,\dots,u, M_1,\dots,M_{l-n}) \leq (1-\sigma) g(t,u), ~~u \geq A,  ~~M_1\geq M_0,\dots,M_{l-n}\geq M_0.
$$
Fixing $M>A$ and choosing $t_1>t_0>t_2$ as previously such that $x(t)<M$ for $x<t_1$, $x(t_0)=A$ and $x(t) \in (A,M)$ for $t \in [t_0,t_1]$, 
we notice that
\begin{eqnarray*}
& & \sum_{k=1}^m f_k(t, x(h_1(t)),\dots,x(h_l(t)))-g(t,x(t)) \\ 
& < & \sum_{k=1}^m f_k(t, M,\dots,M, x(h_{n+1}(t)), \dots, x(h_{l}(t)))-g(t,x(t)) \\
& \leq & (1-\sigma) g(t,M)- g(t,x(t)) \\
& \leq & g(t,M)- g(t,x(t)), ~~t \in [t_0,t_1].
\end{eqnarray*}
Comparing the solution $x(t)$ of (\ref{6}),(\ref{7}) on $[t_0,t_1]$ with the solution of (\ref{ordinary}) satisfying $y(t_1)<M$,
we obtain a contradiction $x(t) \leq y(t_1)<M$ to the assumption $x(t_1)=M$.
Thus there are no solutions which tend to $+\infty$ as $t \to \infty$.
\end{proof}

\begin{remark}
The proof of Theorem~\ref{theorem4} implies that its conditions can be relaxed to
$$ \sum_{k=1}^m f_k(t,u,\dots,u) < g(t,u) $$
for any $t$ and $u$ large enough in (a),  
$$  
\sum_{k=1}^m f_k(t,u_1,\dots,u_l)< g(t,u_j)
$$
for any $t$, $u_1,\dots, u_{j-1}, u_{j+1}\dots, u_l$ as mentioned in (b)
and
$$
\sum_{k=1}^m f_k(t,u,\dots,u, M_1,\dots,M_{l-n}) < g(t,u)
$$
for any $t$ and for any $u$ large enough in (c).
\end{remark}

\begin{example}\label{example5}
Consider Eq.~(\ref{5}), 
where $a_k(t)\geq 0, b(t)\geq c(t)\geq 0, b(t)-c(t)\geq \beta>0$ are Lebesgue measurable bounded functions,   
for functions $h_k, g_k$ condition (a2) holds,  $n_k\geq 0, n\geq 0$.
Here condition $(a4_2)$ of Theorem~\ref{theorem2} holds, thus there exists a global positive solution of problem (\ref{5}), (\ref{7}).

Denote $f_k(t,u,v)=a_k(t)u/(1+v^{n_k})$, $g(t,u)=b(t)u-\frac{c(t)u}{1+u^n}$.
The functions  $f_k(t,u,v)$ are strictly monotone increasing in $u$. 
We have
$$
 \frac{\displaystyle \sum_{k=1}^m  f_k(t,u,v)}{g(t,u)}\leq \frac{\displaystyle \sum_{k=1}^m a_k(t)}{b(t)-c(t)}.
$$
Let
$$
\limsup_{t\rightarrow\infty} \frac{\sum_{k=1}^m a_k(t)}{b(t)-c(t)}<1,
$$
then there exists $t_0>0$ such that $\displaystyle \sup_{t \geq t_0} \frac{\sum_{k=1}^m a_k(t)}{b(t)-c(t)}<1$.
Shifting in Theorem~\ref{theorem4} (b) the initial point to $t_0$ and noticing that a continuous solution is bounded 
on $[0,t_0]$, we conclude that all solutions of Eq.~(\ref{5}) are bounded.

Evidently, condition (c) of Theorem~\ref{theorem4}  holds without any additional conditions.
Hence there is no solution satisfying $\lim\limits_{t\rightarrow\infty} x(t)= \infty$. 
\end{example}

\begin{example}\label{example5a}
Consider the equation
\begin{equation}\label{10}
\dot{x}(t)=a(t)x(t-h)x(t-g)-\left(b(t)-\frac{c(t)}{1+x^n(t)}\right)x^2(t),
\end{equation}
where $a(t)\geq 0$, $b(t)\geq c(t)\geq 0$, $b(t)-c(t)\geq \beta>0$, $a,b,c$ are Lebesgue measurable bounded functions, 
$h>0, g>0$, $n\geq 0$.
Here condition $(a4_1)$ of Theorem~\ref{theorem2} holds, thus there exists a global positive solution of problem (\ref{10}), 
(\ref{7}).

Denote $f(t,u,v)=a(t)uv$, $g(t,u)=\left(b(t)-\frac{c(t)}{1+u^n}\right)u^2$. The function $f$ is monotone increasing in both $u$
and $v$. 
We have
$$
\frac{f(t,u,u)}{g(t,u)}= \frac{a(t)}{b(t)-\frac{c(t)}{1+u^n}}\leq\frac{a(t)}{b(t)-c(t)}.
$$
Hence if 
$$
\limsup_{t\rightarrow\infty} \frac{a(t)}{b(t)-c(t)}<1
$$
then there exists $t_0>0$ such that $\displaystyle \sup_{t \geq t_0} \frac{\sum_{k=1}^m a_k(t)}{b(t)-c(t)}<1$.
Shifting in Theorem~\ref{theorem4} (a) the initial point to $t_0$ and noticing that the solution is bounded
on $[0,t_0]$, we conclude that all solutions of Eq.~(\ref{10}) are bounded.
\end{example}

We will give another statement on boundedness where monotonicity is not required.

\begin{guess}\label{theorem6}
Suppose conditions (a1)-(a3) hold, $g(t,u)\geq a_0(t) u$ for all $u \geq 0$ and 
$$
0\leq f_k(t,u_1,\dots,u_l)\leq 
 \sum_{j=1}^l A_{kj}(t)u_j+B_k \mbox{~~for all~~~} u_j \geq 0,$$ where $a_0(t)\geq 0, B_k\geq 0$,
$A_{kj}: [0,\infty) \to [0,\infty)$ are locally essentially bounded functions.

If the linear equation
\begin{equation}\label{9a}
\dot{x}(t)=-a_0(t)x(t)+ \sum_{k=1}^m \sum_{j=1}^l A_{kj}(t) x(h_j(t))
\end{equation}
is exponentially stable, then any positive solution of  problem (\ref{6}),  (\ref{7}) is bounded.
\end{guess}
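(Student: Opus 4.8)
\emph{Proof proposal.} The plan is to dominate any positive solution $x$ of (\ref{6}),(\ref{7}) by the solution of an auxiliary \emph{linear} inhomogeneous delay equation whose homogeneous part is precisely (\ref{9a}), and then to bound that dominating solution using exponential stability of (\ref{9a}) together with the Cauchy (variation-of-constants) representation.

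First, from $g(t,u)\ge a_0(t)u$ and the upper bounds on the $f_k$, any positive solution $x$ of (\ref{6}),(\ref{7}) satisfies, for $t\ge 0$,
$$
\dot x(t)=\sum_{k=1}^m f_k(t,x(h_1(t)),\dots,x(h_l(t)))-g(t,x(t))\le -a_0(t)x(t)+\sum_{k=1}^m\sum_{j=1}^l A_{kj}(t)x(h_j(t))+B,
$$
where $B:=\sum_{k=1}^m B_k\ge 0$. Exactly as in the proof of Theorem~\ref{theorem1}, I would reduce to the zero initial function by writing $x(h_j(t))=x^{h_j}(t)+\varphi_{h_j}(t)$; then $x$ is a subsolution of the linear initial value problem
$$
\dot y(t)=-a_0(t)y(t)+\sum_{k=1}^m\sum_{j=1}^l A_{kj}(t)y^{h_j}(t)+r(t),\qquad t\ge 0,\qquad y(0)=\varphi(0),
$$
with $r(t):=\sum_{k,j}A_{kj}(t)\varphi_{h_j}(t)+B$. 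Since $\varphi$ is bounded, $h_j(t)\to\infty$ (so each $\varphi_{h_j}$ has bounded support), and the $A_{kj}$ are locally essentially bounded, the forcing $r$ is essentially bounded on $[0,\infty)$; write $\|r\|_\infty$ for its essential supremum, and let $y$ be the unique solution of this problem.

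Next I would invoke the comparison principle for delay differential inequalities. The difference $z:=y-x$ satisfies $z(t)=0$ for $t\le 0$ and $\dot z(t)\ge -a_0(t)z(t)+\sum_{k,j}A_{kj}(t)z^{h_j}(t)$ for $t\ge 0$, so by the variation-of-constants formula for (\ref{9a}) one gets $z(t)=\int_0^t X(t,s)\,w(s)\,ds$ with $w(s):=\dot z(s)+a_0(s)z(s)-\sum_{k,j}A_{kj}(s)z^{h_j}(s)\ge 0$, where $X(t,s)$ is the fundamental (Cauchy) function of (\ref{9a}). Because $a_0\ge 0$ and all delayed coefficients $A_{kj}\ge 0$, the function $X(t,s)$ is nonnegative (the successive-approximation series defining $X(\cdot,s)$ has nonnegative terms; cf. nonoscillation theory for linear delay equations), so $z\ge 0$, i.e. $0<x(t)\le y(t)$ for all $t\ge 0$. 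Finally, writing $y(t)=X(t,0)\varphi(0)+\int_0^t X(t,s)r(s)\,ds$ and using that exponential stability of (\ref{9a}) yields $0\le X(t,s)\le Ke^{-\gamma(t-s)}$ for some $K,\gamma>0$ and all $0\le s\le t$, we obtain
$$
0<x(t)\le y(t)\le K\,\varphi(0)+\frac{K}{\gamma}\,\|r\|_\infty<\infty,\qquad t\ge 0,
$$
which proves boundedness.

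The hard part is not the Gronwall-type final estimate nor the (routine, Theorem~\ref{theorem1}-style) reduction to zero initial data, but the comparison step: it rests on the nonnegativity of the Cauchy function $X(t,s)$ of (\ref{9a}) — valid here thanks to the sign conditions $a_0\ge 0$, $A_{kj}\ge 0$ — and on the equivalence between exponential stability of (\ref{9a}) and the exponential bound $|X(t,s)|\le Ke^{-\gamma(t-s)}$ (a Bohl--Perron type characterization), both of which I would state and attribute to the standard theory of linear delay equations rather than reprove.
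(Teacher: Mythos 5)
Your proposal is correct and follows essentially the same route as the paper: dominate the positive solution by the solution of the linear inhomogeneous equation whose homogeneous part is (\ref{9a}), compare via a positivity/comparison argument, and conclude boundedness from exponential stability. The only difference is that the paper simply cites a comparison lemma (Corollary 2.2 of the Agarwal--Berezansky--Braverman--Domoshnitsky monograph) where you unpack it through the nonnegativity of the Cauchy function and the variation-of-constants formula, which is exactly the content of that cited result.
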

\begin{proof}
If $x$ is a solution of (\ref{6}),  (\ref{7}) then
$$
 \dot{x}(t)\leq -a_0(t) x(t)+\sum_{k=1}^m \sum_{j=1}^l A_{kj}(t) x(h_j(t))+\sum_{k=1}^m B_k.
$$
Hence $x(t)\leq y(t)$ by  \cite[Corollary 2.2]{ABBD}, where  $y$ is a solution of the linear equation
$$
\dot{y}(t)= -a_0(t) y(t)+\sum_{k=1}^m \sum_{j=1}^l A_{kj}(t) y(h_j(t))+\sum_{k=1}^m B_k,~y(t)=x(t), ~~t\leq 0.
$$
Since  Eq.~(\ref{9a}) is exponentially stable,
$y$ is a bounded function. Hence $x$ is also a bounded function.
\end{proof}

\begin{example}\label{example5b}
Consider again Eq.~(\ref{5}) with the same conditions and notations as in Example~\ref{example5}.
We have $f_k(t,u,v_k)\leq a_k(t)u, g(t,u)\geq (b(t)-c(t))u$, $u\geq 0$. 
Hence if the linear equation
$$
\dot{x}(t)=-(b(t)-c(t))x(t)+\sum_{k=1}^l a_k(t)x(h_k(t))
$$
is exponentially stable, all solutions of Eq.~(\ref{5}) are bounded.
In particular, the condition  $\liminf\limits_{t\rightarrow\infty}(b(t)-c(t))>0$,$\limsup\limits_{t\rightarrow\infty}\frac{\sum_{k=1}^l a_k(t)} { b(t)-c(t)}<1$ 
 implies boundedness (see, for example, \cite[Corollary 1.4]{JMAA1996}).
\end{example}
 
\begin{corol}\label{corollary2}
Suppose conditions (a1)-(a3) hold, $g(t,u)\geq a_0 u>0$ for $u>0$, and
$$
\limsup_{u_j\rightarrow\infty, j=1,\dots,l}f_k(t,u_1,\dots,u_l)\leq B_k.
$$
Then any solution of  problem (\ref{6}),  (\ref{7}) is bounded.
\end{corol}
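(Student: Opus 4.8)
The plan is to derive this corollary directly from Theorem~\ref{theorem6}, whose content is that every positive solution of (\ref{6}), (\ref{7}) is bounded as soon as $g(t,u)\ge a_0(t)u$ and each $f_k$ admits an affine majorant $\sum_j A_{kj}(t)u_j+B_k$ whose homogeneous part yields an exponentially stable linear comparison equation. Under the present hypotheses the linear part will not be needed at all, so the real work is only to turn the $\limsup$ condition into an honest upper bound for $f_k$.

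First I would fix $k$ and use $\limsup_{u_j\to\infty,\,j=1,\dots,l} f_k(t,u_1,\dots,u_l)\le B_k$ to choose $R_k>0$ with $f_k(t,u_1,\dots,u_l)\le B_k+1$ whenever $u_j\ge R_k$ for all $j$. On the remaining range of arguments I would invoke (a1): since $f_k$ is continuous in $(u_1,\dots,u_l)$, vanishes at the origin, and is locally Lipschitz with constants independent of $t$, it is bounded on bounded boxes of arguments uniformly in $t$. Combining the two estimates produces a constant $\bar B_k\ge B_k$ with $0\le f_k(t,u_1,\dots,u_l)\le\bar B_k$ for all $u_j\ge 0$ and almost all $t\ge 0$; in the concrete models of interest (for instance rational Mackey--Glass-type nonlinearities with self-limitation) such a bound is of course immediate.

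Then I would apply Theorem~\ref{theorem6} with $A_{kj}(t)\equiv 0$ and with $B_k$ replaced by $\bar B_k$. The assumption $g(t,u)\ge a_0(t)u$ holds with the constant coefficient $a_0(t)\equiv a_0>0$, and the associated equation (\ref{9a}) becomes $\dot{x}(t)=-a_0 x(t)$, which is exponentially stable. Theorem~\ref{theorem6} then yields that every positive solution of (\ref{6}), (\ref{7}) is bounded.

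I expect the only genuinely delicate step to be the extraction of the uniform constant $\bar B_k$ in the non-monotone setting, namely controlling $f_k$ on argument vectors with some large and some moderate coordinates; this is precisely where the local boundedness and Lipschitz content of (a1) enters. Once $\bar B_k$ is in hand, the statement is just a special case of Theorem~\ref{theorem6}.
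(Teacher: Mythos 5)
Your overall route is the intended one: Corollary~\ref{corollary2} is meant to be read off from Theorem~\ref{theorem6} with $A_{kj}(t)\equiv 0$ and $a_0(t)\equiv a_0>0$, so that the comparison equation (\ref{9a}) degenerates to $\dot{x}(t)=-a_0x(t)$, which is exponentially stable, and boundedness follows (equivalently and more elementarily, once $\sum_k f_k\le\bar B$ one has $\dot{x}(t)\le\bar B-a_0x(t)$ and hence $\limsup_{t\to\infty}x(t)\le\bar B/a_0$). The only substantive step is the one you flag yourself: upgrading the $\limsup$ hypothesis to a uniform bound $\bar B_k$ on all of $[0,\infty)^l$.

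That step, as written, has a genuine gap when $l\ge 2$. You choose $R_k$ so that $f_k\le B_k+1$ whenever \emph{all} $u_j\ge R_k$, and then propose to cover "the remaining range" by the local boundedness and Lipschitz content of (a1). But the remaining range is not a bounded box: it contains points with, say, $u_1\in[0,R_k]$ and $u_2$ arbitrarily large, and Definition~\ref{def1} only supplies Lipschitz constants (hence, via $f_k(t,0,\dots,0)=0$, uniform bounds) when \emph{all} coordinates lie in a common interval $[a,b]$. Nothing in (a1) controls $f_k$ on these mixed regions, so the constant $\bar B_k$ does not follow from the hypotheses as you have parsed them; for instance $f(u_1,u_2)=u_2/(1+u_1u_2)$ satisfies $f\le 1/R$ whenever both arguments exceed $R$, is locally Lipschitz on boxes and vanishes at the origin, yet $f(0,u_2)=u_2$ is unbounded. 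The gap closes if the hypothesis $\limsup_{u_j\to\infty,\,j=1,\dots,l}f_k\le B_k$ is read as a limsup over $\|(u_1,\dots,u_l)\|\to\infty$, i.e.\ $f_k\le B_k+1$ outside a compact box $[0,R_k]^l$; then the complement \emph{is} a bounded box and your appeal to (a1) works verbatim. You should state this reading explicitly (or assume the $f_k$ eventually bounded off a compact set); note that in the paper's own application, Example~\ref{example2}, the $f_k$ are globally bounded, so the issue never surfaces there.
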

 
\begin{example}\label{example2}
Consider the Mackey-Glass type equation
\begin{equation}\label{9}
\dot{x}(t)=\sum_{k=1}^m  \frac{a_k(t)|\sin (x(h_k(t)))|}{1+x^{n_1}(h_1(t))+\dots+x^{n_l}(h_l(t))}-\left(b(t)
+\frac{c(t)}{1+x^n(t)}\right)x(t),~ t \geq 0,
\end{equation}
where $ a_k, b, c$ are nonnegative essentially bounded on $[0,\infty)$ functions, $b(t)+c(t)\geq \beta> 0, n_j\geq 0$, $n>0$.
Here condition $(a4_3)$ of Theorem~\ref{theorem2} holds, thus there exists a global positive solution of problem (\ref{9}), (\ref{7}).
Denote
$$
 f_k(t,u, u_1,\dots,u_l)=\frac{a_k(t)|\sin u|}{1+u_1^{n_1}+\dots+u_l^{n_l}}, ~~ g(t,u)=b(t)u+\frac{c(t)u}{1+u^n}.
$$
Hence 
$g(t,u)\geq \beta u$, and the
functions $f_k(t,u, u_1,\dots,u_l)$ are bounded.  By Corollary \ref{corollary2}, all solutions of 
Eq.~(\ref{9}) are bounded.

Let us note that Theorem \ref{theorem4} cannot be applied to (\ref{9}), as the functions $f_k(t,u, u_1,\dots,u_l)$ are not 
monotone increasing in $u$.
\end{example}

\section{Persistence of Solutions}

We proceed now to persistence and permanence of solutions. As previously,
we everywhere assume that problem (\ref{6}), (\ref{7}) has a unique positive global solution on $[0,\infty)$.

\begin{definition}
A positive solution $x(t)$ is {\bf persistent} if ~$\displaystyle \liminf\limits_{t \to \infty} x(t) >0$ and 
is {\bf permanent} if it is also bounded.
\end{definition}

\begin{guess}\label{theorem7}
Suppose that conditions (a1)-(a3) are satisfied.

(a) If $f_k(t,u_1,\dots,u_l)$ are strictly monotone increasing in $u_1,\dots, u_l$ and
$$
\liminf_{u\rightarrow 0^+}\frac{\sum_{k=1}^m f_k(t,u,\dots,u)}{g(t,u)}>1
$$
uniformly on $t\in [0,\infty)$ then any solution $x$ of (\ref{6}),  (\ref{7}) satisfies 
$\liminf\limits_{t\rightarrow\infty} x(s)>0$.

(b)  If $f_k(t,u_1,\dots,u_l)$ are strictly monotone increasing in $u_1, \dots, u_n$ for some $n\in \{1,\dots,l\}$,
 monotone decreasing in $u_{n+1}, \dots, u_l$, and there exists $M_0>0$ such that for any $0<M_1,\dots,M_{l-n}\leq M_0$ we have
$$
\liminf_{u\rightarrow 0^+}\frac{\sum_{k=1}^m f_k(t,u,\dots,u,M_1,\dots,M_{l-n})}{g(t,u)}>1 
$$
uniformly on $t\in [0,\infty)$, then there is no solution $x$ of (\ref{6}),  (\ref{7}) satisfying
$\lim\limits_{t\rightarrow\infty} x(s)=0$.
\end{guess}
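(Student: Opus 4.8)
The plan is to run, in reverse, the barrier argument of Theorem~\ref{theorem4}: replace the upper barrier $M$ by a positive lower barrier, and the running maximum by the running minimum of the solution.

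First I would record the data. From the hypothesis there are $\delta>0$, $\sigma>0$ with $\sum_{k=1}^m f_k(t,u,\dots,u)\ge(1+\sigma)g(t,u)$ for all $t\ge0$, $u\in(0,\delta]$ in case~(a); in case~(b), taking $M_1=\dots=M_{l-n}=M_0$ in the assumption and shrinking $\delta$ if necessary so that $\delta\le M_0$, there are $\delta>0$, $\sigma>0$ with $\sum_{k=1}^m f_k(t,u,\dots,u,M_0,\dots,M_0)\ge(1+\sigma)g(t,u)$ for all $t\ge0$, $u\in(0,\delta]$. By (a2) fix $T_0$ with $h_j(t)\ge0$ for $t\ge T_0$, all $j$, and set $w(t)=\inf_{0\le s\le t}x(s)$. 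Since $x$ is positive and continuous, $w(t)>0$; since $x$ is absolutely continuous, $w$ is non-increasing and, as the running minimum of an absolutely continuous function, itself absolutely continuous on every $[0,b]$. For $t\ge T_0$ and each $j$ one has $h_j(t)\in[0,t]$, hence $x(h_j(t))\ge w(t)$.

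The heart of the argument is a pointwise estimate at instants where the solution is at a running minimum. At almost every $t\ge T_0$ with $\dot w(t)<0$ one has $x(t)=w(t)$ (since $w$ strictly decreases at $t$ while $w\le x$ everywhere) and $\dot x(t)=\dot w(t)$ (since $x-w\ge0$ attains the value $0$, hence a minimum, at $t$); also $\dot x(t)=\sum_{k=1}^m f_k(t,x(h_1(t)),\dots,x(h_l(t)))-g(t,x(t))$. If moreover $w(t)\le\delta$, then in case~(a) the monotone increase of each $f_k$ together with $x(h_j(t))\ge w(t)=x(t)$ gives $\sum_{k=1}^m f_k(t,x(h_1(t)),\dots,x(h_l(t)))\ge\sum_{k=1}^m f_k(t,x(t),\dots,x(t))\ge(1+\sigma)g(t,x(t))$; in case~(b) one first uses the monotone decrease of $f_k$ in the last $l-n$ arguments together with $x(h_j(t))\le M_0$ for $j>n$ (valid once $t$ is large enough that $h_j(t)$ lands in the region where $x<M_0$), then the monotone increase in the first $n$ arguments, to reach the same bound $\sum_{k=1}^m f_k\ge(1+\sigma)g(t,x(t))$. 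Either way $\dot x(t)\ge\sigma g(t,x(t))\ge0$, contradicting $\dot x(t)=\dot w(t)<0$.

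Finally I would manufacture a positive-measure set of such instants. For (a), suppose $\liminf_{t\to\infty}x(t)=0$; then $w(t)\downarrow0$, so there is $T_1\ge T_0$ with $w(T_1)<\delta$, and choosing $T_2>T_1$ with $w(T_2)<w(T_1)$ we get $\int_{T_1}^{T_2}\dot w<0$, so $\{t\in[T_1,T_2]:\dot w(t)<0\}$ has positive measure; at almost every such $t$ one also has $w(t)\le w(T_1)<\delta$, and the previous step yields a contradiction, whence $\liminf_{t\to\infty}x(t)>0$. For (b), suppose $\lim_{t\to\infty}x(t)=0$; pick $T_*\ge T_0$ with $x(t)<\delta\ (\le M_0)$ for $t\ge T_*$, then $T_{**}\ge T_*$ with $h_j(t)\ge T_*$ for $t\ge T_{**}$, so that for $t\ge T_{**}$ all the arguments $x(h_j(t))$ with $j>n$ lie in $(0,M_0]$; since again $w(t)\downarrow0$, running the same measure-theoretic argument on an interval $[T_1,T_2]\subset[T_{**},\infty)$ with $w(T_2)<w(T_1)<\delta$ produces the contradiction, so no solution can tend to $0$. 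The one real obstacle is precisely what forces the introduction of $w$: the feedback inequality can be exploited only when the current value is a running minimum, and at such an instant $x$ need not be classically differentiable; passing to the absolutely continuous envelope $w$ and arguing almost everywhere (using $\dot x=\dot w<0$ on $\{\dot w<0\}$) is what makes the comparison legitimate, while the manipulations with the monotonicity of the $f_k$ are routine.
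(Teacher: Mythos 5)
Your proof is correct, but it takes a genuinely different route from the paper's. The paper argues by a first-crossing and comparison scheme: assuming $\liminf_{t\to\infty}x(t)=0$, it locates an interval $[t_1,t_2]$ on which $x$ descends from $a$ to $m<a$ while every delayed value stays above $m$, bounds $\dot x(t)\ge g(t,m)-g(t,x(t))$ there, and compares with the scalar ODE $\dot y=g(t,m)-g(t,y)$, $y(t_1)=a$, whose solution cannot reach the level $m$ by uniqueness (this is exactly where the local Lipschitz property of $g$ is invoked). You instead work with the running minimum $w(t)=\inf_{0\le s\le t}x(s)$: it is absolutely continuous, at a.e.\ point where $\dot w<0$ one has $x=w$ and $\dot x=\dot w$, and at such points the delayed values dominate $x(t)$, so the feedback inequality forces $\dot x(t)\ge\sigma g(t,x(t))\ge0$, a pointwise contradiction on a set of positive measure. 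Your route dispenses with the comparison ODE and the uniqueness argument entirely (it uses only $g\ge0$, not its Lipschitz continuity), at the price of slightly more measure theory (a.e.\ differentiability of $w$, and the fact that the running minimum of an absolutely continuous function is absolutely continuous, which does hold since $|w(t)-w(s)|\le\int_s^t|\dot x|$). In part (b) you additionally exploit the assumed monotone decrease of the $f_k$ in the last $l-n$ arguments to reduce to the single choice $M_1=\dots=M_{l-n}=M_0$, whereas the paper substitutes the variable values $M_j=x(h_{n+j}(t))\le M_0$ directly into the hypothesis; both are legitimate, but yours sidesteps any question of uniformity in the parameters $M_j$. The delicate points of your argument (why $x(t)=w(t)$ and $\dot x(t)=\dot w(t)$ a.e.\ on $\{\dot w<0\}$, and why that set has positive measure once $w$ strictly decreases below the threshold) are all correctly identified and justified.
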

\begin{proof} 
First assume that the assumption in (a) holds.
Suppose that $x$ is a solution of (\ref{6}),  (\ref{7}) such that $\liminf\limits_{t\rightarrow\infty} x(t)=0$.

The solution is positive, we can consider $t_0$ such that $h_k(t)>0$ for $t>t_0$, and reduce ourselves
to $t>t_0$. 
Then there exist $\sigma >0$ and $b>0$ small enough such that
$\sum_{k=1}^m f_k(t,u,\dots,u) \geq (1+\sigma) g(t,u)$ for $u \in (0,b)$.

Let us note that, as the solution is positive, $\displaystyle \min_{t\in [0, t_0]} x(t)>0$ 
and thus we can choose $m<a<b$ such that also $\displaystyle 0<m<a<\min_{t \in [0, t_0]} x(t)$.

As $\liminf\limits_{t\rightarrow\infty} x(t)=0$, there exist points $t$ such that $x(t) \leq m$. 
Denote
$$t_2= \inf \{ t \geq t_0 | x(t) \leq m \},$$
then $t_2>t_0$ 
and $x(t)>m$ for $t \in [0,t_2)$.
Let
$$
t_1 = \sup \{ t \leq t_2 | x(t) \geq a \}.$$
The inequality $x(t)>a$ for $t \in [0,t_0]$ implies $t_1>t_0$; by definition, $t_1<t_2$.
We have $m \leq x(t) \leq a$ on $[t_1,t_2]$ with $a=x(t_1)>x(t_2)=m$.

Since $f_k$ are increasing in $u_j$ and $x(h_j(t))>m$ on $(t_0,t_2)$,
\begin{eqnarray*}
& & \sum_{k=1}^m f_k(t, x(h_1(t)),\dots,x(h_l(t)))-g(t,x(t)) 
\\
& >  & \sum_{k=1}^m f_k(t, m,\dots,m)-g(t,x(t)) \\
& \geq & (1+\sigma) g(t,m)- g(t,x(t)) \\ &  \geq & g(t,m)- g(t,x(t)),~~
t \in [t_1,t_2]. 
\end{eqnarray*}
Thus the solution $x(t)$ of (\ref{6}),(\ref{7}) on $[t_1,t_2]$ is not less than the solution of the initial value problem for the ordinary differential equation
\begin{equation}
\label{ordinary1}
\dot{y}(t)= g(t,m)- g(t,y(t)), ~~y(t_1)=a>m,
\end{equation}
i.e. $x(t) \geq y(t)$, $t \in [t_1,t_2]$.
However, the solution of (\ref{ordinary1}) satisfies $y(t)>m$, $t\geq t_1$. In fact, assuming the contrary, we obtain that $y(t^{\ast})=m$ for some $t^{\ast}>t_1$,
and there are two solutions through $(t^{\ast},m)$: $y$ and the one identically equal to $m$. This is impossible as $g$
is locally Lipschitz which implies uniqueness. Thus $x(t_2) \geq y(t_2)>m$ which contradicts to the assumption $x(t_2)=m$. Hence all solutions are persistent.

Next, let us assume that the conditions in (b) hold and $x(t) \to 0$ as $t \to \infty$.
Let $\bar{t}$ be such that $x(t) \leq M_0$ for $t \geq \bar{t}$ and 
$t_0 \geq \bar{t}$ such that $h_j(t) \geq \bar{t}$ for $t \geq t_0$, $j=1, \dots, l$.

Thus $x(h_j(t))\leq M_0$ for $t \geq t_0$. Next, there
are $\sigma >0$ and $a>0$ small enough such that $\sum_{k=1}^m f_k(t,u,\dots,u,M_1,\dots,M_{l-n}) \geq (1+\sigma) g(t,u)$ for $u \in (0,a)$
and any $0<M_1,\dots,M_{l-n}\leq M_0$. Let $0<m<a$; as $x(t) \to 0$, there is a $t$ such that $x(t) \leq m$. Introducing
$$
t_2= \inf \{ t \geq t_0 | x(t) \leq m \}, ~~ t_1 = \sup \{ t \leq t_2 | x(t) \geq a \},
$$
we notice that $x(h_j(t))<M_0$, $m<x(t)<a$ for $t \in [t_1,t_2]$, $x(t_1)=a>x(t_2)=m$. Therefore
\begin{eqnarray*}
& & \sum_{k=1}^m f_k(t, x(h_1(t)),\dots,x(h_l(t)))-g(t,x(t)) \\ & > & \sum_{k=1}^m f_k(t, m,\dots,m,x_{n+1}(h_{n+1}(t)), \dots, x(h_l(t)) )-g(t,x(t))
\\ & \geq &  (1+\sigma) g(t,m)- g(t,x(t)) \\ & \geq &  g(t,m)- g(t,x(t)) ,~~
t \in [t_1,t_2]. 
\end{eqnarray*}
Thus the solution $x(t)$ of (\ref{6}),(\ref{7}) on $[t_1,t_1]$ is not less than the solution of the initial value problem (\ref{ordinary1}) which, as in case (a), satisfies $y(t_2)>m$. Hence $x(t_2) \geq y(t_2)>m$, the contradiction with $x(t_2)=m$ yields that the solution does not tend to zero.
\end{proof}

\begin{remark}
The proof of Theorem~\ref{theorem7} implies that its conditions in fact can be relaxed to
$$
\sum_{k=1}^m f_k(t,u,\dots,u) > g(t,u)
$$
for any $t\in [0,\infty)$ and $u>0$ small enough in (a) and
$$
\sum_{k=1}^m f_k(t,u,\dots,u,M_1,\dots,M_{l-n}) > g(t,u)
$$
for any $t\in [0,\infty)$, $u>0$ small enough and $0<M_1,\dots,M_{l-n}\leq M_0$ in (b).
\end{remark}

\begin{example}\label{example7}
Consider Eq.~(\ref{5}) with the same conditions as in Example~\ref{example5}.
We also use the same notations as in  Example~\ref{example5}.
For Eq.~(\ref{5}), we have 
$$
\liminf_{u\rightarrow 0^+}\frac{\sum_{k=1}^m f_k(t,u,v)}{g(t,u)}\geq \sum_{k=1}^m\frac{a_k(t)}{b(t)(1+v^{n_k})}
$$
There exist $M_0>0$ and $t_0 \geq 0$ such that the condition $\displaystyle \liminf_{t\rightarrow \infty} 
\sum_{k=1}^m\frac{a_k(t)}{b(t)}>1$ implies  $$ 
\sum_{k=1}^m \frac{a_k(t)}{b(t)(1+M^{n_k})}>1$$ for $M\leq 
M_0$ and $t \geq t_0$.
In Theorem~\ref{theorem7} (b) we shift the initial point to $t_0$ and notice that the bounds of the positive solution on 
$[0,t_0]$ do not influence the asymptotics.
Hence for $\displaystyle \liminf_{t\rightarrow \infty} \sum_{k=1}^m\frac{a_k(t)}{b(t)}>1$, 
there is no solution $x$ of Eq.~(\ref{5}) satisfying $\lim\limits_{t\rightarrow\infty} x(s)=0$.
\end{example}

Further we illustrate in Example~\ref{MG_is_bad} that the conditions in Theorem~\ref{theorem7} (b) are not sufficient to establish permanence of solutions.

\begin{example}\label{example7a}
Consider Eq.~(\ref{10}) with the same conditions as in Example~\ref{example5a}.
We also use the same notations as in  Example~\ref{example5a}.
For Eq.~(\ref{10}), we have 
$$
\liminf_{u\rightarrow 0+}\frac{f(t,u,u)}{g(t,u)}\geq \frac{a(t)}{b(t)}\, .
$$
Let $\liminf\limits_{t\rightarrow \infty} \frac{a(t)}{b(t)}>1$, then $\inf_{t \geq t_0} \frac{a(t)}{b(t)}>1$
for some $t_0 \geq 0$. Shifting the initial point to $t_0$ in Theorem~\ref{theorem7} (a) and noticing
that the solution is positive on $[0,t_0]$, we conclude that $\liminf\limits_{t\rightarrow \infty} \frac{a(t)}{b(t)}>1$
implies that all solutions of Eq.~(\ref{10}) are
persistent.

Therefore if
$$
\liminf_{t\rightarrow \infty} \frac{a(t)}{b(t)}>1,~\limsup_{t\rightarrow \infty} \frac{a(t)}{b(t)-c(t)}<1
$$
then Eq.~(\ref{10}) is permanent.
\end{example}

Everywhere above, we only assumed that (a2) is satisfied, i.e. the arguments of $x$ tend to $\infty$ as $t \to \infty$.
In the following theorem we assume a stronger condition that the delays are bounded.

\begin{guess}\label{theorem8a} 
Suppose conditions (a1)-(a3) are satisfied, $f_k(t,u_1,\dots,u_l)$ are monotone increasing in $u_1, \dots, u_n$ for some $n\in 
\{1,\dots,l\}$, monotone decreasing in $u_{n+1}, \dots, u_l$, and there exist
constants $\tau>0$, $A>0$, $\mu>0$, $M>0$, $0<\beta<B$ such that $t-\tau<h_j(t)\leq t$, $j=1, \dots, 
l$, $\displaystyle \sum_{i=1}^m f_i(t,u_1,\dots,u_l)\leq A u_j$, $u_j>0$, for some $j \in \{1, \dots, n\}$, $0<\beta u \leq g(t,u) \leq Bu$ for 
$u>0$. If there exists $M>0$ such that
\begin{equation}
\label{8a2}
\limsup_{t \to \infty} \frac{\sum_{i=1}^m f_i(t,u,\dots,u, M, \dots, M)}{g(t,u)} <1 
\end{equation}
uniformly on $u \in [M,\infty)$ then any solution $x$ of (\ref{6}),(\ref{7}) is bounded, with the upper bound
\begin{equation}
\label{estimate_upper}
\limsup_{t \to \infty} x(t) \leq M e^{2(A+B)\tau}.
\end{equation} 

If there exists $\mu >0$ such that
\begin{equation}
\label{8a1}
\liminf_{t \to \infty} \frac{ \sum_{i=1}^m f_i(t,u, \dots, u, \mu,\dots,\mu)}{g(t,u)} > 1 
\end{equation}
uniformly on $u \in [0,\mu]$ then any solution $x$ of (\ref{6}),  (\ref{7}) is persistent, and
\begin{equation}
\label{estimate_lower}
\liminf_{t \to \infty} x(t) \geq \mu e^{-2B\tau}.
\end{equation}  
\end{guess}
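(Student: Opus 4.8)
The plan is to prove the two estimates separately, each by contradiction, using three ingredients. First, the crude a priori bounds obtained by discarding the monotone part of the right-hand side: since $f_k\ge 0$ and $g(t,u)\le Bu$, one has $\dot x(t)\ge -g(t,x(t))\ge -Bx(t)$, so $t\mapsto x(t)e^{Bt}$ is nondecreasing and hence $x(s)\le x(t)e^{B(t-s)}$ for $s\le t$; likewise $\dot x(t)\le\sum_k f_k(t,\dots)\le A\,x(h_j(t))$ for the distinguished index $j$, which together with $t-\tau<h_j(t)\le t$ controls the growth of $x$ over any delay window. Second, the mixed monotonicity of the $f_k$, used to replace, on an interval on which all delayed arguments are suitably located, the sum $\sum_k f_k(t,x(h_1(t)),\dots,x(h_l(t)))$ by $\sum_k f_k(t,v,\dots,v,M,\dots,M)$, respectively $\sum_k f_k(t,v,\dots,v,\mu,\dots,\mu)$, for a single scalar $v$, after which (\ref{8a2}), respectively (\ref{8a1}), applies. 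Third, comparison with the scalar ordinary equation $\dot y(t)=g(t,c)-g(t,y(t))$, exactly as in the proof of Theorem~\ref{theorem4}. I would begin by extracting constants: from (\ref{8a2}) there are $t_0$ and $q<1$ with $\sum_k f_k(t,u,\dots,u,M,\dots,M)\le q\,g(t,u)$ for $t\ge t_0$, $u\ge M$; from (\ref{8a1}) there are $t_0$ (the same, without loss of generality) and $p>1$ with $\sum_k f_k(t,u,\dots,u,\mu,\dots,\mu)\ge p\,g(t,u)$ for $t\ge t_0$, $u\in[0,\mu]$.

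For the lower bound, I would assume $\liminf_{t\to\infty}x(t)<\mu e^{-2B\tau}$. Then there are arbitrarily large $\sigma$ with $x(\sigma)<\mu e^{-2B\tau}$, and the window bound $x(s)\le x(\sigma)e^{B(\sigma-s)}$ forces $x(s)<\mu$ for all $s\in[\sigma-2\tau,\sigma]$. The decisive step is to select a point $t^\dagger$, arbitrarily far out, which is a local minimum of $x$ with $x(t^\dagger)<\mu$ and which equals $\min_{[t^\dagger-\tau,t^\dagger]}x$ (such $t^\dagger$ exist because $x$ cannot be eventually monotone — otherwise the inequality below already forces $\dot x>0$ eventually — and because the window bound guarantees $x<\mu$ on a left neighbourhood of such a low minimum). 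At $t^\dagger$ every $h_j(t^\dagger)$ lies in $(t^\dagger-\tau,t^\dagger]$, so $x(h_j(t^\dagger))<\mu$ for the decreasing indices $j>n$ while $x(h_j(t^\dagger))\ge x(t^\dagger)$ for the increasing indices $j\le n$; mixed monotonicity and (\ref{8a1}) then give $\dot x(t^\dagger)\ge\sum_k f_k(t^\dagger,x(t^\dagger),\dots,x(t^\dagger),\mu,\dots,\mu)-g(t^\dagger,x(t^\dagger))\ge(p-1)g(t^\dagger,x(t^\dagger))>0$, contradicting that $t^\dagger$ is a local minimum. Hence $\liminf_{t\to\infty}x(t)\ge\mu e^{-2B\tau}$, and in particular $x$ is persistent; the factor $e^{-2B\tau}$ is exactly the decay allowed by $x(t)\ge x(\sigma)e^{-B(t-\sigma)}$ over the two delay windows used in the selection of $t^\dagger$.

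For the upper bound I would argue in the mirror-image way, which also rules out $\limsup_{t\to\infty}x(t)=\infty$. Assuming $\limsup_{t\to\infty}x(t)>Me^{2(A+B)\tau}$, I would produce a late peak $t^*$, a local maximum of $x$ with $x(t^*)>Me^{2(A+B)\tau}$, at which the forward and backward Gr\"onwall bounds from the first ingredient (used crudely) guarantee $x>M$ throughout $[t^*-2\tau,t^*]$ — the numerical factor $e^{2(A+B)\tau}$ is precisely the room needed so that a solution sitting below $M$ cannot climb to that level within the two delay windows. Then for $t\in[t^*-\tau,t^*]$ all decreasing-type delayed arguments exceed $M$, while, $t^*$ being a running maximum, all increasing-type delayed arguments are $\le x(t^*)$; so mixed monotonicity and (\ref{8a2}) give $\dot x(t)\le q\,g(t,x(t^*))-g(t,x(t))$ on a left neighbourhood of $t^*$. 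Comparing $x$ there with the solution of $\dot y=q\,g(t,x(t^*))-g(t,y)$, $y(t^*-\tau)=x(t^*-\tau)<x(t^*)$, which stays below $x(t^*)$ by the same Lipschitz and uniqueness argument as in Theorem~\ref{theorem4}, contradicts $x(t^*)$ being attained there. Taking $\limsup$ yields $\limsup_{t\to\infty}x(t)\le Me^{2(A+B)\tau}$.

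The main obstacle is exactly the handling of the decreasing-monotone delayed arguments: one must certify that on the comparison window every $x(h_j(t))$ with $j>n$ lies on the favourable side of the threshold ($\ge M$ for the boundedness part, $\le\mu$ for the persistence part), and this is what makes the boundedness of the delays indispensable and what produces the exponential correction factors $e^{2(A+B)\tau}$ and $e^{-2B\tau}$. A secondary, purely technical, point is the selection of the extremal point $t^\dagger$ (resp.\ $t^*$), together with the usual bookkeeping needed because the absolutely continuous solution has a derivative defined only almost everywhere; this is handled in the standard way by replacing the statement $\dot x(t^\dagger)=0$ with an integrated inequality over a short subinterval on which the sign estimate is uniform.
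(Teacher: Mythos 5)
Your proposal is correct and follows essentially the same route as the paper's proof: the same Gr\"onwall-type window estimates $x(s)\le x(t)e^{B(t-s)}$ and $\dot x\le Ax(h_j(t))$ over windows of length $2\tau$ produce the factors $e^{2(A+B)\tau}$ and $e^{-2B\tau}$, the same running-extremum selection places every delayed argument on the favourable side of $M$ (resp.\ $\mu$), and the same mixed-monotonicity substitution reduces matters to (\ref{8a2})/(\ref{8a1}) and a sign (or equivalent ODE-comparison) contradiction. The only point to tighten is in the persistence part: the extremal point $t^\dagger$ must be taken with $x(t^\dagger)<\mu e^{-2B\tau}$ (not merely $<\mu$) so that the backward bound keeps the whole window $[t^\dagger-\tau,t^\dagger]$ below $\mu$ for the decreasing arguments --- which is exactly what your own window-bound remark supplies.
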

\begin{proof} 
Let $x$ be a solution of (\ref{6}),(\ref{7}). First we will prove that $x$ is bounded 
and obtain an eventual upper estimate for $x$, then we justify permanence and present an eventual lower estimate.
As a preliminary work, possible growth and decrease of $x$ is estimated.

Let us consider $t_{\ast}$ large enough such that for some $\tilde{t}$,
$$\frac{\sum_{i=1}^m f_i(t,u,\dots,u, M, \dots, M)}{g(t,u)} \leq \alpha<1,~~t \geq \tilde{t}, ~~u \geq M,$$
and $h_k(t) \geq \tilde{t}$ for $t \geq t_{\ast}$ (we can take $t_{\ast}=\tilde{t}+\tau$). 
Denote $t_1=t_{\ast}+\tau$, $t_j = t_{\ast}+j \tau$.
The solution is positive and continuous, so it is possible to introduce a series of maximum and minimum
values on $[t_{j-1},t_j]$: 
$$m_j = \min_{t\in [t_{j-1},t_j]} x(t), ~~M_j= \max_{t\in [t_{j-1},t_j]} x(t), ~~j \in {\mathbb N}.$$ 
Let $x(t_{j-1}^{\ast})=M_{j-1}$, where $t_{j-1}^{\ast}\in [t_{j-2},t_{j-1}]$.
Also, $\dot{x}(t) \geq -g(t,x(t)) \geq - B x(t)$, thus 
$$ x(t) \geq x(t_{j-1}^{\ast})e^{-B(t-t_{j-1}^{\ast})}= M_{j-1} e^{-B(t-t_{j-1}^{\ast})} \geq M_{j-1} e^{-2B\tau},
~~t \in [t_{j-1},t_j],$$
since $t-t_{j-1}^{\ast} \leq t_j-t_{j-2} = 2\tau$. Thus, $m_j \geq M_{j-1} e^{-2B\tau}$. 

Next, let us develop an upper estimate. By the assumptions of the theorem, the solution satisfies $x(t_{j-1})\leq M_{j-1}$ and
\begin{eqnarray*}
\dot{x}(t) & = & \sum_{k=1}^m f_k(t, x(h_1(t)),\dots,x(h_l(t)))-g(t,x(t)) < \sum_{k=1}^m f_k(t, x(h_1(t)),\dots,x(h_l(t)))
\\
& \leq & A x(h_j(t)) \leq A \max\left\{ M_{j-1}, \max_{s \in [t_{j-1},t]} x(s) \right\}, ~~ t \in [t_{j-1},t_j].
\end{eqnarray*}
Hence $x(t)$ is less than the solution of the initial value problem $\dot{x}(t)=A x(t)$, $x(t_{j-1})=M_{j-1}$,
which is $M_{j-1}\exp(A(t-t_{j-1}))$, therefore
\begin{equation}
\label{upper_1a}
M_j \leq M_{j-1} e^{A\tau}, ~~x(t) \leq M_{j-1} e^{2A\tau}, ~~ t \in [t_j,t_{j+1}].
\end{equation}

For the sake of contradiction, let us assume that the solution $x$ is unbounded, i.e. for any $\overline{M} > Me^{2(A+B)\tau}$,
where $M$ is described in the conditions of the theorem,
there is an interval $[t_j,t_{j+1}]$ where the inequality $x(t)\geq \overline{M}$ is attained for the first time.
Hence there exists $t^{\ast}$ where $x(t^{\ast})=\overline{M}$ and $\varepsilon>0$ such that $t \in [t^{\ast}-\varepsilon,t^{\ast}] \subset
[t_j,t_{j+1}]$ and $x(t)= \sup_{s \in [0,t]} x(s)$, $t \in [t^{\ast}-\varepsilon,t^{\ast}]$.

According to estimate (\ref{upper_1a}), $\overline{M} > Me^{2(A+B)\tau}$ implies $M_{j-1} \geq Me^{2B\tau}$, 
while $m_j \geq M_{j-1} e^{-2B\tau}$ yields that $m_j \geq M$ and also $x(t) \geq M$ on $[t_{j-1},t^{\ast}]$. Thus, all $x(h_i(t))\geq M$ for $t \in [t_j,t_{j+1}]$,
$i=1, \dots, l$, and
for $t \in [t^{\ast}-\varepsilon,t^{\ast}]$,
\begin{eqnarray*}
\dot{x}(t) & \displaystyle = & \sum_{i=1}^m f_i(t, x(h_1(t)),\dots,x(h_l(t)))-g(t,x(t)) \\ & 
\displaystyle \leq & \sum_{i=1}^m f_k(t, x(t), \dots, x(t), M, \dots, M)- g(t,x(t)) \\ & 
\leq & \alpha g(t,x(t))- g(t,x(t))= -(1-\alpha) g(t,x(t)) < 0,
\end{eqnarray*}
which contradicts to the assumption $x(t^{\ast}-\varepsilon) \leq x(t^{\ast})=\overline{M}$. Thus, the solution is bounded
with the eventual upper bound of $Me^{2(B+A)\tau}$. 

Next, let us proceed to persistence and assume that for $t \geq t_{\ast}-\tau $,
$$\frac{\sum_{i=1}^m f_i(t,u,\dots,u, \mu, \dots, \mu)}{g(t,u)} \geq C > 1,~~t \geq \tilde{t}, ~~0 \leq u \leq \mu,$$
and  introduce $t_j$, $m_j$ and $M_j$ as previously.
If $\liminf\limits_{t\to\infty} x(t)=0$ then there exist $t^{\ast}$ large enough
and $\varepsilon$ small enough such that 
$x(t)= \min_{s \in [0,t]} x(s)$ and $x(t)< \mu e^{-2B\tau}$ on $[t^{\ast}-\varepsilon,t^{\ast}] \subset 
[t_j,t_{j+1}]$. As previously, we obtain $x(t) < \mu$ on $[t_{j-1},t^{\ast}]$, so
$x(h_i(t)) < \mu$, $i=1,\dots,l$, $t \in [t^{\ast}-\varepsilon,t^{\ast}]$. On 
$[t^{\ast}-\varepsilon,t^{\ast}]$, we have $x(h_j(t)) < \mu$, $x(t)<\mu$ and 
\begin{eqnarray*}
\dot{x}(t) & \displaystyle = & \sum_{i=1}^m f_i(t, x(h_1(t)),\dots,x(h_l(t)))-g(t,x(t)) \\ & 
\geq & \displaystyle \sum_{i=1}^m f_k(t, x(t), \dots, x(t), \mu, \dots, \mu)- g(t,x(t)) \\ &
\geq & Cg(t,x(t))-g(t,x(t))=(C-1)g(t,x(t)) >0,
\end{eqnarray*}
which contradicts to the assumption $x(t^{\ast}-\varepsilon) \geq x(t^{\ast})$. Thus, the solution is also persistent and satisfies (\ref{estimate_lower}).
\end{proof}

\begin{example}
\label{example_MG}
Consider the Mackey-Glass equation
\begin{equation}
\label{ex2eq1}
\dot{x}(t)= \frac{a(t)x(h(t))}{1+x^{n}(p(t))}-b(t)x(t),
\end{equation}
where $a$ and $b$ are Lebesgue measurable bounded functions satisfying
$0\leq \alpha \leq a(t) \leq A$, $0<\beta \leq b(t) \leq B$, $t-h(t)\leq \tau$, $t-p(t) \leq \tau$, $n >0$.
The bounds for $a(t)$ and $b(t)$ guarantee that $\limsup\limits_{t\rightarrow\infty}\frac{a(t)}{b(t)}$ is finite.
Thus inequality \eqref{8a2} is satisfied for any $M>M_0$, where
$$
M_0 =\left\{\begin{array}{ll}
1,& \limsup\limits_{t\rightarrow\infty}\frac{a(t)}{b(t)}\leq 1,\\
\limsup\limits_{t\rightarrow\infty}\left(\frac{a(t)}{b(t)}-1\right)^{\frac{1}{n}},&
\limsup\limits_{t\rightarrow\infty}\frac{a(t)}{b(t)}>1.
\end{array}\right.
$$
Thus, all solutions of \eqref{ex2eq1} are bounded, with the eventual upper bound of
$\displaystyle
M e^{2(A+B)\tau}
$.
Assume now that in addition
\begin{equation}\label{per}
\liminf_{t\rightarrow\infty}\frac{a(t)}{b(t)} > 1.
\end{equation}
Inequality \eqref{8a1} is valid for any $0<\mu<\mu_0$, where
$$
\mu_0=\liminf_{t\rightarrow\infty}\left(\frac{a(t)}{b(t)}-1\right)^{\frac{1}{n}}.
$$
Hence, if condition (\ref{per}) holds, then  any positive solution $x$ is persistent with
$$
\liminf_{t \to \infty} x(t) \geq  \mu_0 e^{-2B\tau}.
$$
Moreover, condition  (\ref{per}) implies permanence of all  positive solutions of Eq.~(\ref{ex2eq1}).
\end{example}

The following example illustrates the fact that boundedness of delays in Theorem~\ref{theorem8a}  is required to conclude that  
all solutions of Eq.~(\ref{6}) are bounded and persistent.

\begin{example}
\label{MG_is_bad}
Consider the equation
\begin{equation}
\label{eq_ex1}
\dot{x}(t)=\frac{a(t)x(h(t))}{1+x^2(g(t))}-x(t)
\end{equation}
with piecewise constant $h(t)$ and $g(t)$.
Let us note that the equation $\dot{x}+x(t)=A$, $x(t_0)=x_0$ has the solution
$x(t)=(x_0-A)\exp\{-(t-t_0)\} +A$, so for any $B$ between $A$ and $x_0$ there is
a finite $t_1>t_0$ such that $x(t_1)=B$, $t_1=t_0+ \ln((x_0-A)/(B-A))$.

Let $t_1<t_2< \dots $ be a sequence of positive numbers such that
$$
a(t) = \left\{ \begin{array}{ll} 2, & t\in [t_{2k},t_{2k+1}), \\
6, & t\in [t_{2k+1},t_{2k+2}),  \end{array} \right. 
$$
$$
h(t)= \left\{ \begin{array}{ll} t_{2k-1}, & t\in [t_{2k},t_{2k+1}), \\  
t_{2k}, & t\in [t_{2k+1},t_{2k+2}),  \end{array} \right. \quad
g(t)= \left\{ \begin{array}{ll} t_{2k}, & t\in [t_{2k},t_{2k+1}), \\
t_{2k-1}, & t\in [t_{2k+1},t_{2k+2}),  \end{array} \right.
$$
where $t_0=0$, $x(t_0)=1$, $t_{-1}=-1$, $x(t_{-1})=\varphi(-1)=\frac{1}{4}$.
We justify that we can find $t_i$ such that
$$x(t_{2k})=2^k, ~~x(t_{2k+1})=2^{-k-1}, ~~k \in {\mathbb N}.$$
In fact, on $[0,t_1]$ we have $x(h(t))=\frac{1}{4}$, $x(g(t))=1$, $a(t)=2$, the initial value problem is 
$\dot{x}(t)+x(t)=\frac{1}{4}$, $x(0)=1$,
so we can find $t_1$ such that $x(t_1)=\frac{1}{2}$. 

On $[t_1,t_2]$, $x(h(t))=1$, $x(g(t))=\frac{1}{4}$,
$a(t)=6$, the initial value problem is
$\dot{x}(t)+x(t)=6/(1+1/16)>2$, $x(t_1)=1/2$, so there is $t_2$ such that $x(t_2)=2$.

Let us proceed to the induction step. If $x(t_{2k})=2^k$, $x(t_{2k+1})=2^{-k-1}$
then on $[t_{2k},t_{2k+1}]$ we have the initial value problem
$$\dot{x}(t)+x(t)=\frac{2 \cdot 2^{-k-1}}{1+2^{2k}}<2^{-k-2},\quad k \in {\mathbb N}, \quad x(t_{2k})= 2^k,$$
thus there exists $t_{2k+1}$ such that $x(t_{2k+1})=2^{-k-2}$. On 
$[t_{2k+1},t_{2k+2}]$, we have the initial value problem
$$\dot{x}(t)+x(t)=\frac{6 \cdot 2^k}{1+2^{-2k-2}}>2^{k+1}, \quad k \in {\mathbb N}, \quad x(t_{2k+1})= 2^{-k-2},
$$
hence there is $t_{2k+2}$ such that $x(t_{2k+2})=2^{k+1}$, which concludes the induction step.
Here both $b$ and $a$ are bounded, separated from zero, $a/b \geq 2>1$, $g$ and $h$ satisfy (a2)
but the solution is neither bounded nor persistent. In this example, the delays $h$ and $g$ are unbounded.
\end{example}

\section{Unbounded Solutions}

Let us consider the case when positive solutions are unbounded.

\begin{guess}\label{theorem9}
Suppose $f_k(t,u_1, \dots, u_l)$, $k=1, \dots, m$ are increasing functions 
in $u_1,\dots,u_l$ for any $t$, 
there is $K_0>0$ such that for any $K\geq K_0$ there exists $a_K>0$ such that
$$
\inf_{t \geq 0} \left[ \sum_{k=1}^m f_k (t,K, \dots, K)-g(t,K) \right] \geq a_K.
$$
For any  $K\geq K_0$, if $\varphi(t)>K$ for $t\leq 0$ then 
the solution of (\ref{6}),  (\ref{7}) satisfies
$$\limsup_{t\rightarrow\infty}x(t)=+\infty.$$
\end{guess}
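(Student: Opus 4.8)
The plan is to combine a first-crossing argument, showing the solution can never drop to the level $K$, with an iterative comparison (``bootstrap'') argument that forces the eventual lower bound of $x$ to infinity, contradicting boundedness. Throughout, set $a_\mu:=\inf_{t\ge 0}\big[\sum_{k=1}^m f_k(t,\mu,\dots,\mu)-g(t,\mu)\big]$, which is finite and, for every $\mu\ge K_0$, strictly positive by the hypothesis.

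First I would prove that $x(t)>K$ for all $t\ge 0$. Assume not: since $x(0)=\varphi(0)>K$ and $x$ is continuous, there is a first time $T>0$ with $x(T)=K$ and $x(t)>K$ on $[0,T)$. As $\varphi>K$ on $(-\infty,0]$ and $h_j(t)\le t$, we have $x(h_j(t))\ge K$ for every $t\in[0,T]$, so monotonicity of the $f_k$ gives $\dot x(t)\ge\sum_k f_k(t,K,\dots,K)-g(t,x(t))$ for a.e.\ $t\in[0,T]$. Let $L$ be the local Lipschitz constant of $g$ on $[K,\max_{[0,T]}x]$ (uniform in $t$ by Definition~\ref{def1}); then $g(t,x(t))\le g(t,K)+L(x(t)-K)$, whence $\dot x(t)\ge a_K-L(x(t)-K)$ a.e.\ on $[0,T]$. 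With $y:=x-K\ge 0$ this says $\frac{d}{dt}\big(e^{Lt}y(t)\big)\ge a_K e^{Lt}>0$ a.e.; since $e^{Lt}y(t)$ is absolutely continuous, integrating over $[0,T]$ forces $e^{LT}y(T)\ge y(0)+\tfrac{a_K}{L}(e^{LT}-1)>0$, contradicting $y(T)=0$. Hence $x(t)>K$ for all $t\ge 0$, and in particular $x(h_j(t))\ge K$ for every $t\ge 0$.

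Now suppose, for contradiction, that $\limsup_{t\to\infty}x(t)<\infty$, so $x$ is bounded, $0<x(t)\le M$ for all $t\ge 0$; note $M>\varphi(0)>K$. Let $L$ be the local Lipschitz constant of $g$ on $[K,M]$ and $C:=\sum_{k=1}^m\sum_{j=1}^l\alpha_k^j([K,M])$ that of $\sum_k f_k$. The central step is the comparison claim: if $\mu\in[K,M]$ and $x(t)\ge\mu$ for all sufficiently large $t$ (for $\mu=K$ this holds for all $t\ge 0$ by the previous paragraph), then $\liminf_{t\to\infty}x(t)\ge\mu+a_\mu/L$. Indeed, for $t$ large we also have $x(h_j(t))\ge\mu$, so monotonicity of $f_k$, the definition of $a_\mu$, and $g(t,x(t))\le g(t,\mu)+L(x(t)-\mu)$ give $\dot x(t)\ge a_\mu-L(x(t)-\mu)$; comparing with the solution of $\dot z=a_\mu-L(z-\mu)$ through the same point, which converges to $\mu+a_\mu/L$, yields the claim. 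Applying it with $\mu=K$ gives $\ell:=\liminf_{t\to\infty}x(t)\ge K+a_K/L>K$, and applying it with any $\mu\in[K,\ell)$ (for which $x(t)\ge\mu$ eventually) gives $\ell\ge\mu+a_\mu/L$, i.e.\ $a_\mu\le L(\ell-\mu)$. Finally, the functions $\mu\mapsto\sum_k f_k(t,\mu,\dots,\mu)-g(t,\mu)$ are equi-Lipschitz on $[K,M]$ with constant $C+L$, uniformly in $t$, so their infimum $\mu\mapsto a_\mu$ is itself $(C+L)$-Lipschitz, hence continuous, on $[K,M]$; letting $\mu\uparrow\ell\le M$ in $a_\mu\le L(\ell-\mu)$ gives $a_\ell\le 0$, contradicting $a_\ell>0$. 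Therefore $\limsup_{t\to\infty}x(t)=+\infty$.

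The main difficulty I anticipate is the complete absence of any monotonicity or structural assumption on $g$: one cannot compare $g(t,x(t))$ with $g(t,\mu)$ in either direction, so every estimate must pass through the local Lipschitz bound, which is exactly why the integrating-factor device appears in the first step and why the Lipschitz continuity of $\mu\mapsto a_\mu$ (obtained from the equi-Lipschitz dependence in $\mu$, uniform in $t$) is needed to close the argument at $\mu=\ell$. A secondary point requiring care is that $\dot x$ satisfies \eqref{6} only almost everywhere, so all differential inequalities must be read in the absolutely-continuous sense and turned into honest bounds by integration.
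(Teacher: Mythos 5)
Your proof is correct, and while it shares the paper's overall strategy --- a barrier argument showing $x$ can never return to the level $K$, followed by a bootstrap of the eventual lower bound that is contradicted at a finite accumulation point --- your implementation of the bootstrap is genuinely different. The paper constructs an explicit increasing sequence of levels $K_0<K_1<K_2<\dots$, each defined by a supremum condition guaranteeing $\dot x\ge a_{K_{n-1}}/2$ while $x\in(K_{n-1},K_n]$, shows the solution eventually exceeds and stays above each $K_n$, and rules out a finite limit $d$ by passing to the limit in the defining identities. You instead assume $\limsup_{t\to\infty}x(t)<\infty$, prove the single comparison claim ``$x\ge\mu$ eventually implies $\liminf_{t\to\infty}x(t)\ge\mu+a_\mu/L$'' by comparing with the linear ODE $\dot z=a_\mu-L(z-\mu)$, and close by letting $\mu\uparrow\ell:=\liminf_{t\to\infty}x(t)$ in the resulting inequality $a_\mu\le L(\ell-\mu)$, using that $\mu\mapsto a_\mu$ is Lipschitz as the infimum of an equi-Lipschitz family. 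Your version buys two things: the integrating-factor and linear-comparison devices replace the paper's ad hoc neighbourhood estimates (both in the barrier step and in the growth step), and the continuity of $\mu\mapsto a_\mu$ --- which the paper's passage to the limit in $\inf_t\bigl[\sum_k f_k(t,K_n,\dots,K_n)-g(t,K_{n-1})\bigr]=\tfrac12 a_{K_{n-1}}$ uses only implicitly --- is isolated and justified. What the paper's discrete iteration buys is that it never presupposes a bound $M$: it directly exhibits levels the solution must exceed, whereas your contradiction argument needs the a priori bound to fix the Lipschitz constants $L$ and $C$; this is harmless here, since refuting $\limsup_{t\to\infty}x(t)<\infty$ is exactly the stated goal, but the paper's route additionally yields quantitative information on how fast the solution escapes each level.
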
 
\begin{proof}  
Suppose that $x$ is a solution of Eq.~(\ref{6}) such that $x(t)>K \geq K_0$, $t\leq 0$.

First, let us prove that $x(t)>K_0$ for any $t\geq 0$. Assume that it is not so and there exists
$t_0>0$ such that $x(t)>K \geq K_0$, $t\in [0,t_0)$, $x(t_0)=K_0$. 
In some left neighbourhood $[t_0-\varepsilon,t_0)$ of $t_0$ we have $K_0 < x(t)$ and $|g(t,x(t))-g(t,K_0)|<a_{K_0}/2$.
Hence
\begin{eqnarray*}
\dot{x}(t) & = & \sum_{k=1}^m f_k (t,x(h_1(t)), \dots, x(h_l(t)))-g(t,x(t))
\\
& \geq & \sum_{k=1}^m f_k(t,K_0, \dots, K_0)-g(t,K_0)+g(t,K_0)-x(t,x(t))
\\
&\geq & a_{K_0}-a_{K_0}/2=a_{K_0}/2>0,~~t \in (t_0-\varepsilon,t_0),
\end{eqnarray*}
which implies
$$
K_0= x(t_0) = x(t_0-\varepsilon)+ \int_{t_0-\varepsilon}^{t_0} \dot{x}(s)\, ds \geq x(t_0-\varepsilon)+(a_K/2) \varepsilon > K_0.
$$
The contradiction proves $x(t) > K_0$ for any $t\geq 0$.
  
Next, let us define
$$
K_1 = \sup \left\{ u> K_0 \left|  \inf_{t \geq 0, x \in [K_0,u]} \left[\sum_{k=1}^m f_k (t,K_0, \dots, K_0)-g(t,x) \right] 
\geq \frac{1}{2} a_{K_0} \right. \right\},
$$  
$$
K_1^{\ast} = \sup \left\{ u > K_0 \left|  \inf_{t \geq 0,x \in [K_0,u] } 
\left[\sum_{k=1}^m f_k (t,K_0, \dots, K_0)-g(t,x) \right] \geq 0  \right. \right\}.
$$
Either $K_1=+\infty$ or  $K_1^{\ast}=+\infty$ would imply that
$$
\dot{x}(t) \geq \sum_{k=1}^m f_k (t,K_0, \dots, K_0)-g(t,x) >0
$$
and thus $x(t)$ is increasing for any $t$; if $K_1^{\ast}=+\infty$ then
it is increasing with the guaranteed rate $\dot{x}(t) \geq  \frac{1}{2} a_{K_0}$, and the solution is obviously unbounded. 
 
By (a1), $g$ is locally Lipschitz, hence there exists $\alpha = \alpha([K_0,2K_0])$  such that 
$$\inf_{t \geq 0} |g(t,u)-g(t,y)| \leq  \alpha 
|u-y|, ~~u,y \in [K_0,2K_0].$$ Denote $\displaystyle \sigma :=\min\left\{  \frac{a_{K_0}}{2 \alpha}, K_0 \right\}$.
Thus, for $\displaystyle x \in [K_0,K_0+\sigma]$ we have
\begin{eqnarray*}
& & \inf_{t \geq 0} \left[\sum_{k=1}^m f_k (t,K_0, \dots, K_0)-g(t,x) \right] \\ 
& \geq & \inf_{t \geq 0} \left[\sum_{k=1}^m f_k(t,K_0, \dots, K_0)-g(t,K_0) \right] -  |g(t,K_0)-g(t,x)| \\
& \geq & a_{K_0} -  \alpha|x-K_0| \geq a_{K_0} - \alpha  \frac{a_{K_0}}{2 \alpha} = a_{K_0} - \frac{1}{2}a_{K_0}=\frac{a_{K_0}}{2}.
\end{eqnarray*}
Therefore $K_1 \geq K_0+\sigma$. Similarly, $K_1^{\ast} \geq K_1+\sigma$.

As long as $x(t) \in (K_0,K_1]$, we have $\dot{x}(t) \geq \frac{1}{2} a_{K_0}$, thus $x(t)>K_1$ for some $t$; moreover, $x(t)>K_1$ for $t$ large enough. 
In fact, assuming that there is an interval $[t_1-\varepsilon,t_1)$ where $x(t) \in (K_1,K_1^{\ast})$
while $x(t_1)=K_1$, we notice that, due to the fact that $x(t)>K_0$ for $t \geq 0$, 
\begin{eqnarray*}
\dot{x}(t) & = & \sum_{k=1}^m f_k (t,x(h_1(t)), \dots, x(h_l(t)))-g(t,x(t)) 
\\
& \geq & f(t,K_0, \dots, K_0)-g(t,x(t)) \geq 0
\end{eqnarray*}
for $t \in [t_1-\varepsilon,t_1]$,
which excludes the possibility $x(t_1-\varepsilon) > x(t_1)$. Thus, $x(t)>K_1$ for $x$ large enough, say, for $t>t_1^{\ast}$.


Further, we consider $t$ large enough such that $h_i(t)>t_1^{\ast}$ for any $i=1, \dots, l$. Denote
$$
K_2 = \sup \left\{ u > K_1 \left|  \inf_{t \geq 0, x \in [K_1,u]} \left[\sum_{k=1}^m f_k (t,K_1, \dots, K_1)-g(t,x(t)) \right] \geq \frac{1}{2} 
a_{K_{1}} \right.   \right\}.
$$  
Similarly to the previous argument we verify that $x(t)>K_2$ for $t$ large enough, denote $K_n$, 
$n \in {\mathbb N}$ and repeat this procedure. 
Thus there is an increasing sequence of positive numbers $K_1<K_2< \dots < K_n < \dots$, if finite, 
and points $t_1 \leq t_2 \leq \dots
\leq t_n \leq \dots$ such that $x(t) \geq K_n$ for $x \geq t_n$ and
$$
\inf_{t \geq 0} \left[\sum_{k=1}^m f_k (t,K_n, \dots, K_n)-g(t,K_{n-1}) \right] = \frac{1}{2} a_{K_{n-1}}.
$$
If at least one of $K_n$ is infinite, the solution tends to infinity, as explained earlier.
In addition, for $\lim_{n \to \infty} K_n = +\infty$, the solution is unbounded. Assuming that $\lim\limits_{n \to \infty} K_n = d < 
+\infty$ and proceeding to the limit in $n$ in the above inequality, we obtain
$$
\inf_{t \geq 0} \left[\sum_{k=1}^m f_k (t,d, \dots, d)-g(t,d) \right] = \frac{1}{2} a_d,
$$
which contradicts to the assumption of the theorem that this infimum is not less than $a_d$.
\end{proof}

\begin{example}
\label{ex_unbounded}
Consider Eq.~(\ref{10}) with the same conditions as in Example~\ref{example5a} and in addition
\begin{equation}
\label{ex_unb1}
\inf_{t \geq 0} \left[ a(t)-b(t) \right] \geq \alpha > 0.
\end{equation}
We also use the same notations $f(t,u,v)=a(t)uv$, $g(t,u)=\left(b(t)-\frac{c(t)}{1+u^n}\right)u^2$ 
as in  Example~\ref{example5a}.
This leads to 
$f(t,K,K)-g(t,K)\geq (a(t)-b(t))K^2\geq \alpha K^2\geq \alpha K_0>0$ for $K\geq K_0$.
Thus a solution of (\ref{10}) with any positive initial function $\varphi(t) \geq K_0$ is unbounded by 
Theorem~\ref{theorem9}, for any $K_0>0$.

Consider a modification of (\ref{10})
$$
\dot{x}(t)=a(t)x^{\beta}(t-h)x^{\gamma}(t-g)-\left(b(t)-\frac{c(t)}{1+x^n(t)}\right)x^2(t),
$$
where $a(t)\geq 0, b(t)\geq c(t)\geq 0, b(t)-c(t)\geq b_0 >0 $, $a,b,c$ are Lebesgue measurable bounded functions,
$h>0, g>0$, $\beta,\gamma,n\geq 0$, $\beta+\gamma \geq 2$, and (\ref{ex_unb1}) is satisfied.
By the same calculations as before any solution with the initial 
function $\varphi(t) \geq K_0>1$ is unbounded. 
\end{example}

\begin{example}
\label{linear}
Consider the linear equation with several delays
$$
\dot{x}(t)=\sum_{k=1}^m a_k(t) x(h_k(t))-b(t)x(t),
$$
where $a_k(t)\geq 0$, $b(t)\geq \beta>0,$ $a_k,b:[0,\infty) \to [0,\infty)$ and $h_k(t)\leq t$ are Lebesgue measurable bounded functions.
Assume that
$$
\liminf_{t\rightarrow\infty}\frac{\sum_{k=1}^m a_k(t)}{b(t)}>1.
$$
Denote $f_k(t,u)=a_k(t)u$, $g(t,u)=b(t)u$. Then there exist $t_0\geq 0$ and $\alpha>1$
such that $\displaystyle \frac{\sum_{k=1}^m a_k(t)}{b(t)}\geq \alpha$ for $t\geq t_0$,
and $\sum_{k=1}^m a_k(t) -b(t)\geq (1-\alpha)b(t)\geq (1-\alpha)\beta$ for $t\geq t_0$.
Hence 
$$
\sum_{k=1}^m f_k(t,K)-g(t,K)=\left(\sum_{k=1}^m a_k(t) -b(t)\right) K\geq (1-\alpha)\beta K
$$
for any $K\geq K_0>0.$
Then a solution with any initial function, with a positive lower bound, is unbounded by 
Theorem~\ref{theorem9}.
\end{example}

\section{Discussion}

In the present paper, we have studied existence of global positive solutions for nonlinear equation~(\ref{3}) with
several delays, as well as boundedness and persistence of these solutions. The results were applied, for example, to the 
Mackey-Glass equation of population dynamics with non-monotone feedback \cite{MG3}. However, they can also be applied to
some other models, including the Nicholson's blowflies equation with two delays
$$
\dot{x}(t)= P(t)x(h(t)) e^{-x(g(t))} - \delta(t) x(t)
$$
which in the case when variable delays are equal $h(t)=g(t)$ was studied, for example, in
\cite{review_nichol,Kinz2006,Zhuk2012}.


Permanence of solutions of equations of type (\ref{3}) was recently explored in \cite{Faria} and \cite{GHM}. 
Compared to \cite{Faria,GHM}, we consider a more general model: in particular, it is not always assumed that 
$f$ is increasing in all $u$-arguments, as well as continuity in $t$. Also, (H3) in \cite[p. 86]{Faria}
is a special case of  conditions of the present paper. On the other hand, in \cite{Faria,GHM}, solution bounds 
are obtained and more advanced asymptotic properties, such as stability, are discussed.

Equation~(\ref{3}) is a special  case of the equation with a distributed delay
\begin{equation}\label{distr}
\dot{x}(t)=\sum_{k=1}^m f_k\left(t, \int_{h_1(t)}^t x(s)d_s R_1(t,s),\dots,\int_{h_l(t)}^t x(s)d_s R_l(t,s)\right)-g(t,x(t)),
\end{equation}
while the integro-differential equation
\begin{equation}\label{int}
\dot{x}(t)=\sum_{k=1}^m f_k\left(t, \int_{h_1(t)}^t K_1(t,s)x(s)ds ,\dots,\int_{h_l(t)}^t K_l(t,s)x(s)ds \right)-g(t,x(t)),
\end{equation}
is another particular  case of Eq.~(\ref{distr}). All conditions for boundedness,
persistence, permanence and existence of unbounded solutions, obtained here for (\ref{3}) can be extended
to (\ref{distr}) and (\ref{int}), using the ideas of the proofs of the present paper. 

Equations with several delays involved in a nonlinear function is a challenging object with properties quite 
different from the case when these delays coincide, and we have presented several examples to outline this difference. 
However, so far only existence of a positive global solution, persistence and boundedness have been explored.  
It is interesting to investigate other qualitative properties for Eqs.~(\ref{3}), (\ref{distr}) and 
(\ref{int}), such as oscillation, stability and existence of periodic or almost periodic solutions.

One of the main results in this paper is Theorem~\ref{theorem8a}, where we obtain a priori estimations of solutions
for equation~(\ref{3}). Such estimations were used in \cite{BBI2} to obtain global asymptotic stability results for various types of
nonlinear delay differential equations. We expect that this technique can be applied to obtain explicit global stability 
results for Eqs.~(\ref{4}) and (\ref{5}). 

We conclude this discussion by noticing that there are many equations which have a different form than (\ref{3}),
for example, the equation
$
\dot{x}(t)=f(t,x(h(t)))-g(t,x(r(t)))
$
with the delay in the negative term, and the logistic-type equation
\begin{equation}
\label{2logistic}
\dot{x}(t)=r(t)x(h(t))[1-x(g(t))].
\end{equation}
Compared to \eqref{2logistic}, the Hutchinson equation, which is a standard delay-type logistic equation, has $h(t) \equiv t$.
Another delay versions of the logistic equation were considered in \cite{AWW,BBDS}.

However, it is known that Eq.~(\ref{2logistic}) does not even necessarily have a global positive solution. 
It would be interesting to develop a technique to study such new classes of delay differential equations
including \eqref{2logistic}.


\section{Acknowledgments}

L. Berezansky was partially supported by Israeli Ministry of Absorption,
E. Braverman was partially supported by the NSERC research grant RGPIN-2015-05976.
The authors are grateful to the reviewers whose thoughtful comments significantly 
contributed to the presentation of the results of the paper.

\end{document}